\newtheorem{thm}{Theorem}[section]
\newtheorem{cor}[thm]{Corollary}
\newtheorem{lem}[thm]{Lemma}
\newtheorem{prop}[thm]{Proposition}
\newtheorem{dfn}[thm]{Definition}
\newtheorem{rem}[thm]{Remark}
\newcommand{\cal}{\mathcal}
\begin{document}
	
\title[Square root problem and Subnormal Aluthge transforms]{Square root problem and Subnormal Aluthge transforms}
\author[H. El Azhar, A. Hanine, K. Idrissi and E. H. Zerouali]{Hamza El Azhar $^{1}$, Abdelouahab Hanine$^{1}$, Kaissar Idrissi \MakeLowercase{and} El Hassan Zerouali $^{1}$}
\address{$^{1}$Center of mathematical research of Rabat, P.O. box 1014, Department of Mathematics, Faculty of sciences, Mohammed V university in Rabat, Rabat, Morocco.}
\email{\textcolor[rgb]{0.00,0.00,0.84}{hamza.elazhar@um5s.net.ma; a.hanine@um5r.ac.ma; i.kaissar@um5r.ac.ma; elhassan.zerouali@fsr.um5.ac.ma}}

\keywords{Finitely atomic measures, support of multiplicative convolution, subnormal Aluthge transform of weighted shifts, the	Square Root Problem for measure.}
\subjclass[2020]{Primary 47B20, 47B37.}
\begin{abstract}
	For a non negative measure $\mu$ with $p$ atoms,  we study the  relation between the Square Root Problem of $\mu$ and  the problem of subnormality of ${\tilde W_\mu}$ the Aluthge transform of the associated unilateral weighted shift. We
	use an approach based on   uniquely represented elements in the support of $\mu*\mu$. We first show that if ${\tilde W_\mu}$ is subnormal, then $2p-1\le card(supp(\mu*\mu))\le [\frac{(p-1)^2+6}{2}]$. We rewrite several results known for finitely atomic measure having  at most five atoms and  give a
	complete solution for measures  six atoms.
\end{abstract}
\maketitle
\footnotetext{The  authors are supported by the CeReMaR and the  Hassan II Academy of sciences. The last author is supported by African university of Sciences and technology-Abuja. Nigeria.}
\section{Introduction}
Let us denote  $\mathcal{H}$ an infinite dimensional  Hilbert space and   let $\mathcal{L(H)}$ be  the space of all bounded linear operators on $\mathcal{H}$. An operator  $T \in \mathcal{L(H)}$ is normal if $TT^*=T^*T$,  is subnormal if it is the restriction of some normal operators and is hyponormal if $T^*T-TT^*\ge 0$. Here $T^*$ stands for the usual adjoint operator of $T$. The polar decomposition of an operator is given by the unique representation $T= U|T|$, where $|T| = (T^*T)^{\frac{1}{2}}$ and $U$ is a partial isometry satisfying $ker U = ker T$ and $ker U^*=ker T^*$. The Aluthge transform is then given by the expression 
$$\tilde T= |T|^{\frac{1}{2}}U|T |^{\frac{1}{2}}.$$
The Aluthge  transform was introduced in  \cite{al} by Aluthge, in
order to extend several  inequalities valid for hyponormal operators, and has received deep attention in the recent years.

We consider below $\mathcal{H}=l^2(\mathbb{Z}_+)$ endowed by some  orthonormal basis $\{e_n\}_{n\in\mathbb{Z}_+}$. The forward  shift operator  $W_{\alpha}$  is defined on the basis  by $W_\alpha e_n=\alpha_ne_{n+1}$, where   $\alpha=\{\alpha_n\}_{n\geq 0}$ is a given  a  sequence of positive real numbers (called {\it weights}). We associate with $W_\alpha$ the moments sequence  obtained  by $$\gamma_0=1  \mbox{  and } \gamma_k\equiv\gamma_k(\alpha):=\alpha_0^2\alpha_1^2\cdots\alpha_{k-1}^2 \mbox{ for } k\geq 1.$$ 
We will say that a sequence $\gamma$ admits a representing signed measure (called also a charge) supported in $K \subset {\mathbb R}$,  if 
\begin{equation}\label{cmp}
	\gamma_n = \int_Kt^nd\mu(t)\: \: \mbox{ for every } \: n\ge 0 \:\: \: \mbox{and} \: \: supp(\mu) \subset K.
\end{equation}
The weighted shift operator is bounded if and only if  $\|W_{\alpha}\|=sup_{n\geq0}\alpha_n<+\infty$.  It is clear  that $W_{\alpha}$ is never  normal  and that  $W_\alpha$ is hyponormal precisely when $\alpha_n$ is non decreasing. On the other hand,   Berger's Theorem says, $W_{\alpha}$  is a subnormal operator, if and only if, there exists a nonnegative Borelean measure $\mu$ (called {\it Berger measure}), which is a representing measure of $\{\gamma_n\}_{n\geq0}$ and such that $supp(\mu)\subset[0,\|W_{\alpha}\|^2]$.  In this case we will also write $W_{\alpha}=W_{\mu}$

The Aluthge transform $\tilde W_\alpha$ of a weighted shift  $W_\alpha$  is also a weighted shift, denoted below   $W_{\tilde \alpha}$ . Indeed, it is easy  to check that  $|W_\alpha| e_n=\alpha_ne_n$ and that $Ue_n=e_{n+1}$. It   follows then that
$$\tilde W_\alpha e_n= \sqrt{\alpha_n\alpha_{n+1}}e_{n+1}= {\tilde \alpha_n}e_{n+1}= W_{\tilde \alpha}e_n.$$
Notice also that ${\tilde \gamma}^2_n= \frac{1}{\alpha_0}\gamma_n \gamma_{n+1}$. In several recent  papers, the problem of subnormatiy of the Aluthge transform of  weighted shifts  was considered. See \cite{exn}, for example.
The next question has been considered in several recent papers

{\bf (SP) } Under what conditions is subnormality  of a weighted shift preserved under the Aluthge
transform?

For given two measures, let   $*$ denote the multiplicative convolution defined as follows 
$$
[\nu*\mu](A)=\int_{\mathbb{R}} \chi_A(xy) d\nu(x)d\mu(y)
$$
The square root problem is usually stated as follows

{\bf (SRP)}: Given a non negative measure $\mu$. Under what conditions, there exist a nonnegative measure $\nu$ such that $\nu*\nu= \mu$.

In \cite{Curto2019},  it has been observed that 
\begin{prop}\label{lemmeconvolution}
	Let $W_\mu$ be subnormal  weighted shift with  associated representing moment measure  $\mu$. Then  ${\tilde W}_\mu$  is subnormal  if and only if  there exists a $\mathbb{R}^+$-supported probability measure $\nu$ such that $\nu*\nu=\mu*t\mu$.  
\end{prop}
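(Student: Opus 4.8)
The plan is to convert the whole statement into moment data, where it becomes a direct consequence of Berger's theorem together with the fact that multiplicative convolution multiplies moments. Throughout I would normalize $\alpha_0=1$ (equivalently $\gamma_1=1$); this is harmless, because rescaling the weights $\alpha_n\mapsto\alpha_n/\alpha_0$ only scales $W_\mu$ and $\tilde W_\mu$ by the constant $\alpha_0^{-1}$, preserving subnormality of each and merely dilating the associated Berger measures. Under this normalization the recorded identity reads $\tilde\gamma_n^2=\gamma_n\gamma_{n+1}$, and one sees that the measure $\nu$ produced is automatically a probability measure.

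First I would record the moment bookkeeping. Writing $m_n(\rho)=\int t^n\,d\rho$ for the $n$-th moment, the definition of $*$ gives $m_n(\nu*\rho)=\int\!\int (xy)^n\,d\nu(x)\,d\rho(y)=m_n(\nu)\,m_n(\rho)$. Since $m_n(t\mu)=\int t^{n+1}\,d\mu=\gamma_{n+1}$, this yields $m_n(\mu*t\mu)=\gamma_n\gamma_{n+1}$ and $m_n(\nu*\nu)=m_n(\nu)^2$. Because $\mu$ is supported in the compact set $[0,\|W_\mu\|^2]\subset\mathbb{R}^+$, both $\mu*t\mu$ and any candidate $\nu*\nu$ are supported in a fixed compact subinterval of $\mathbb{R}^+$, on which the moment problem is determinate (polynomials are dense in the continuous functions). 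Hence $\nu*\nu=\mu*t\mu$ holds if and only if $m_n(\nu)^2=\gamma_n\gamma_{n+1}=\tilde\gamma_n^2$ for all $n$; as $\nu\ge 0$ is supported on $\mathbb{R}^+$ its moments are nonnegative, so this is equivalent to $m_n(\nu)=\tilde\gamma_n$ for every $n\ge 0$.

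The equivalence then follows from Berger's theorem. By construction $\tilde W_\mu=W_{\tilde\alpha}$ is a bounded weighted shift with moment sequence $\{\tilde\gamma_n\}$, so it is subnormal if and only if $\{\tilde\gamma_n\}$ admits a nonnegative representing measure supported in $[0,\|\tilde W_\mu\|^2]\subset\mathbb{R}^+$; since $\tilde\gamma_0=1$, such a measure is a probability measure. Comparing with the previous step, a representing measure for $\{\tilde\gamma_n\}$ is precisely a nonnegative $\mathbb{R}^+$-supported $\nu$ with $m_n(\nu)=\tilde\gamma_n$, i.e. with $\nu*\nu=\mu*t\mu$. This settles both implications simultaneously: if $\tilde W_\mu$ is subnormal, Berger's theorem supplies such a $\nu$ and then $\nu*\nu=\mu*t\mu$; conversely, a probability measure $\nu$ on $\mathbb{R}^+$ with $\nu*\nu=\mu*t\mu$ has moments $\tilde\gamma_n$, and since $m_n(\nu)^{1/n}=\tilde\gamma_n^{1/n}$ is bounded by $\|\tilde W_\mu\|^2$ its support lies in $[0,\|\tilde W_\mu\|^2]$, so it is a Berger measure for $\tilde W_\mu$, forcing subnormality.

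I expect the conceptual content to be light, with the care concentrated in two places. The first is the determinacy step: equality of moment sequences forces equality of the measures only because everything is compactly supported on a fixed interval, a feature guaranteed by Berger's theorem and the boundedness of $W_\mu$; it is exactly these positivity and compact-support constraints that make the square-root measure unique and keep the argument from degenerating. The second is the normalization ensuring that $\nu$ is a genuine probability measure rather than merely a positive one, which is precisely what the reduction to $\alpha_0=1$ handles. Neither is a serious obstacle, so I would expect the proof to be short once these two points are isolated.
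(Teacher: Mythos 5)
The paper never proves this proposition: it is quoted as an observation from \cite{Curto2019}, so there is no internal proof to compare against. Your argument is correct, and it is essentially the standard argument underlying that citation --- moments multiply under multiplicative convolution, Berger's theorem translates subnormality of $\tilde W_\mu$ into the existence of a compactly supported representing measure for $\{\tilde\gamma_n\}$, and determinacy of compactly supported moment problems (Weierstrass density plus the Riesz representation theorem) upgrades the resulting equality of moment sequences to equality of measures. Two details in your write-up are in fact more careful than the source text. First, the normalization $\alpha_0=1$ (equivalently $\gamma_1=1$) is not merely convenient but necessary for the statement as literally written: $\|\mu*t\mu\|=\gamma_1$ while $\|\nu*\nu\|=1$ for any probability measure $\nu$, so without this normalization the right-hand condition is unsatisfiable on mass grounds alone; the paper glosses over this point (and in its later coefficient-matching it quietly drops the requirement that $\nu$ be a probability measure, writing $\nu=b_1\delta_{\lambda_1}+\cdots+b_p\delta_{\lambda_p}$ with unconstrained positive $b_i$). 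Second, in the converse direction you verify $supp(\nu)\subset[0,\|\tilde W_\mu\|^2]$ via the estimate $m_n(\nu)^{1/n}=\tilde\gamma_n^{1/n}\le\|\tilde W_\mu\|^2$; this supplies exactly the compact-support hypothesis that Berger's theorem requires, a step that is easy to overlook and that the paper never addresses.
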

It follows from the previous proposition that 
$$ \mu \mbox{ has a square root } \Rightarrow {\tilde W}_\mu \mbox{ is subnormal} $$

The question if the reverses implication holds is stated in some recent papers. More precisely, the next question is    suggested by \cite[Question 4.1]{exn}  and stated  in \cite{Curto2019}.\\

{\bf Problem 1. }  If $W_\mu$ is  a subnormal weighted shift with Berger measure $\mu$,  are the following statements equivalent?

(i) $\mu$ as a square root;

(ii) The Aluthge transform ${\tilde W}_\mu$ is subnormal.

The case of finite atomic measures with at most 5 atoms has been completely solved in \cite{Curto2019}. The general case of finite atomic measure remains as an  open conjecture in \cite[ Conjecture 4.6]{Curto2019}.

{\bf  Conjecture 4.6.}  If $W_\mu$   subnormal with a finitely atomic Berger measure $\mu$ then the following statements are equivalent:

(i) $\mu$ has a square root;

(ii) $ {\tilde W}_\mu$ is subnormal.

The main goal of this note is to investigate Conjecture 4.6. We focus our approach on uniquely represented atoms in $supp(\mu*t\mu)$. This allows to rewrite several  proofs from \cite{Curto2019} in a short manner. We also provide an affirmative answer in the case of finite atomic Berger measures with six atoms.

\section{ Support of finite atomic measures associated with weighted shifts having  subnormal Aluthge transform.}
Before starting our study let us mention some useful remarks
\begin{enumerate}
	\item First, for  $\mu$ a non negative measure such that $\mu= a\delta_0+\mu_1$ for some $a>0$ and $0\notin supp(\mu_1)$, here $\delta_0$ is the Dirac measure at zero, we have 
	\begin{itemize}
		\item $\mu*\mu=(a^2+2a\|\mu_1\|)\delta_0 +\mu_1*\mu_1$, where $\|\mu_1\|$ is the total variation of $\mu_1$.
		\item  $\mu*t\mu=  (a\delta_0+\mu_1)*(t\mu_1)=  a\|t\mu_1\|\delta_0+\mu_1*t\mu_1. $
	\end{itemize}
	It follows that $\mu$,  (resp. $\mu*t\mu$) admits a square root if and only if $\mu_1$, (resp. $\mu_1*t\mu_1$) admits a square root. Thus in  Conjecture 4.6, we can assume without any loss of generality that $0\notin supp(\mu)$.
	\item Second, let $\mu$ be a given non negative measure and $x$ be   non negative number. Denote $\mu x(t)= \mu(xt)$ and $\mu_x(t)=\mu(t^x)$, the image measures by the mappings $t\to xt$ and $t\to t^x$,respectively. We  have the following, 
	\begin{itemize}
		\item $\mu x$ has a square root, if and only if $\mu$ has a square root. Thus, without any loss of generality, we can suppose that $\lambda_1 =1$.
		\item  It is not difficult to see that  $\mu_x$ has a square root, if and only if $\mu$ has a square root. Thus, if we put $r=\frac{\lambda_2}{\lambda_1}$, we get a root exists for one $r$ if and only if it  exists for all $ r$. This fact has been noticed in  remark 2.8 in \cite{curt1}, for $ \mu:=a_1\delta_{\lambda_1}+\cdots + a_p\delta_{\lambda_p}$ when $\lambda_i=r^i$ is a geometric sequence. 
	\end{itemize}
\end{enumerate}

In the sequel   $
\mu:=a_1\delta_{\lambda_1}+\cdots + a_p\delta_{\lambda_p}
$ is  a  $p$ atomic measures associated with a weighted shifts   $W_\mu$ and  write further 
$0< \lambda_1=1 <\lambda_2 <\cdots <\lambda_p.$
It is clear that
$$supp(\mu*t\mu) = \{\lambda_1^2 <\lambda_1\lambda_2<\lambda_2^2\le \cdots \le \lambda_{p-1}^2< \lambda_{p-1}\lambda_p<\lambda_p^2\}$$ and if $\nu$ solves  $\nu*\nu=\mu*t\mu$, we get $supp(\nu)=supp(\mu)$. In particular  $\mu:=b_1\delta_{\lambda_1}+\cdots + b_p\delta_{\lambda_p}$ for some positive numbers $b_1, \cdots, b_p$.
We also 

\begin{equation}\label{nu*nu}
	\nu*\nu= \sum_{i=1}^pb_i^2\delta_{\lambda_i^2} + 2\sum_{i\le j}^pb_ib_j\delta_{\lambda_i\lambda_j} \mbox{and } \mu*t\mu =  \sum_{i=1}^pa_i^2\lambda_i\delta_{\lambda_i^2} + \sum_{i\le j}^pa_ia_j(\lambda_i+\lambda_j)\delta_{\lambda_i\lambda_j}.
\end{equation}
The general picture of $\nu*\nu$ is given by the next diagram 
$$
\begin{array}{ccccccc}     
	\lambda_1^2&< \lambda_1\lambda_2& < \lambda_1\lambda_3&<\cdots&<\cdots&<\cdots&< \lambda_1\lambda_p\\
	& \wedge   &\wedge  & \wedge & \cdots  &\cdots  & \wedge \\
	& \lambda_2^2&<\lambda_2\lambda_3&<\cdots&<\cdots& <\cdots&<\lambda_2\lambda_p\\
	&   &\wedge  & \wedge & \cdots  &\cdots  & \wedge \\
	& & \lambda_3^2& \ddots& \vdots& \vdots& \lambda_3\lambda_p\\
	& & & \ddots& \vdots& \vdots& \vdots\\
	& & & & &  \lambda_{p-1}^2&<\lambda_{p-1}\lambda_p\\         
	& & & & &  &\wedge \\
	& & & & & &<\lambda^2_p   
\end{array}
$$

In particular $2p-1 \le card(supp(\mu*\mu))\le \frac{p(p+1)}{2}$. In the case  $\lambda_1 = 0$ we have $2p-2 \le card(supp(\mu*\mu))\le \frac{p(p-1)}{2}+1$. We will show below that the inequality $card(supp(\mu*t\mu))\le \frac{p(p+1)}{2}$ can be improved when $\mu*t\mu$ admits a square root.
\subsection{Uniquely represented atoms and subnormality of Aluthge transform.}
\begin{dfn}
	Let $\mu:=a_1\delta_{\lambda_1}+\cdots + a_p\delta_{\lambda_p}$ be a finitely atomic measure. An atom $\lambda_i\lambda_j$ in $supp(\mu *\mu)$  will be said to be uniquely represented if, for every $k,l $ satisfying 
	$\lambda_i\lambda_j= \lambda_k\lambda_l$, we have $\{i,j\}=\{k,l\}$. The subset of uniquely represented atoms will be denoted ${\cal U}r_\mu$ and the subset of non uniquely represented atoms will be denoted ${\cal NU}r_\mu=supp(\mu*\mu)\setminus {\cal U}r_\mu$.
\end{dfn}
Clearly ${\cal U}r_\mu\ne \emptyset$ since  $\{\lambda_1^2, \lambda_1\lambda_2, \lambda_{p-1}\lambda_p,\lambda_p^2\}\subset {\cal U}r_\mu$.
We assemble in the next proposition some additional  properties of ${\cal U}r_\mu$ to be used in our context.
\begin{prop}\label{regles} Suppose the exists a non negative measure $\nu$ such that  $\nu*\nu=\mu*t\mu$. Then  
	\begin{enumerate}
		\item  For every set $\{i,j, k,l \}$, containing at least 2 numbers,  we have $$  \{\lambda_i\lambda_j, \lambda_j\lambda_k, \lambda_k\lambda_l,\lambda_l\lambda_i \}  \cap{\cal NU}r_\mu\ne \emptyset.$$
		\item  For every $i\ne j$ and $i\ne k $, we have $  \{\lambda_i^2, \lambda_i\lambda_j, \lambda_j\lambda_k, \lambda_i\lambda_k\} \cap{\cal NU}r_\mu \ne \emptyset.$
		\item For every $i\ne j$, we have $  \{\lambda_i^2,\lambda_i\lambda_j,\lambda_j^2\} \cap{\cal NU}r_\mu\ne \emptyset.$
		\item  If $ \{\lambda_i^2, \lambda_i\lambda_j, \lambda_j\lambda_k, \lambda_k^2\} \subset {\cal U}r_\mu$ for some  $i\ne j\ne k $, then $  \lambda_j^2=\lambda_i\lambda_k.$
		\item  $\{ \lambda_2^2, \lambda_{p-1}^2, \lambda_1\lambda_p, \lambda_1\lambda_{p-1}, \lambda_2\lambda_p\} \subset {\cal NU}r_\mu.$
	\end{enumerate}
\end{prop}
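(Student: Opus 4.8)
The plan is to read off, from the identity $\nu*\nu=\mu*t\mu$ and the two expansions in \eqref{nu*nu}, one scalar equation per atom of $supp(\mu*t\mu)$, and to exploit that \emph{uniquely represented} atoms produce \emph{single-term} equations. Writing $\nu=\sum_i b_i\delta_{\lambda_i}$ with $b_i>0$ (recall $supp(\nu)=supp(\mu)$), matching the coefficient of $\delta_{\lambda_i^2}$ when $\lambda_i^2\in{\cal U}r_\mu$ gives $b_i^2=a_i^2\lambda_i$, while matching the coefficient of $\delta_{\lambda_i\lambda_j}$ when $\lambda_i\lambda_j\in{\cal U}r_\mu$ ($i\ne j$) gives $2b_ib_j=a_ia_j(\lambda_i+\lambda_j)$. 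These clean relations are the engine for all five items: whenever a product of atoms admits two factorizations into uniquely represented atoms, the corresponding clean relations may be multiplied, and the weights $a_i$ cancel identically because both sides carry the same multiset of indices, leaving a pure identity in the $\lambda$'s.

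For (3) I would use $\lambda_i^2\cdot\lambda_j^2=(\lambda_i\lambda_j)^2$: assuming $\lambda_i^2,\lambda_i\lambda_j,\lambda_j^2\in{\cal U}r_\mu$ and substituting the clean relations into the identity $(2b_ib_j)^2=4b_i^2b_j^2$ gives $(\lambda_i+\lambda_j)^2=4\lambda_i\lambda_j$, i.e. $(\lambda_i-\lambda_j)^2=0$, impossible for $i\ne j$. For (2) the identity $\lambda_i^2\cdot\lambda_j\lambda_k=\lambda_i\lambda_j\cdot\lambda_i\lambda_k$ gives, through $(2b_ib_j)(2b_ib_k)=2\,b_i^2(2b_jb_k)$, the relation $(\lambda_i+\lambda_j)(\lambda_i+\lambda_k)=2\lambda_i(\lambda_j+\lambda_k)$, that is $(\lambda_i-\lambda_j)(\lambda_i-\lambda_k)=0$, impossible under $i\ne j$, $i\ne k$ (the degenerate case $j=k$ being exactly (3)). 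For (1) the identity $(\lambda_i\lambda_j)(\lambda_k\lambda_l)=(\lambda_j\lambda_k)(\lambda_l\lambda_i)$, through $(2b_ib_j)(2b_kb_l)=(2b_jb_k)(2b_lb_i)$, yields $(\lambda_i+\lambda_j)(\lambda_k+\lambda_l)=(\lambda_j+\lambda_k)(\lambda_l+\lambda_i)$, i.e. $(\lambda_i-\lambda_k)(\lambda_l-\lambda_j)=0$; so the four products cannot all be uniquely represented once the two diagonals are nontrivial ($i\ne k$, $j\ne l$), any index coincidence collapsing the configuration to (2) or (3).

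Item (4) is the one place where I would not seek a contradiction but solve the system: from $\lambda_i^2,\lambda_k^2\in{\cal U}r_\mu$ I get $b_i=a_i\sqrt{\lambda_i}$ and $b_k=a_k\sqrt{\lambda_k}$, and from $\lambda_i\lambda_j,\lambda_j\lambda_k\in{\cal U}r_\mu$ I get the two evaluations $b_j=\frac{a_j(\lambda_i+\lambda_j)}{2\sqrt{\lambda_i}}=\frac{a_j(\lambda_j+\lambda_k)}{2\sqrt{\lambda_k}}$. Equating these and factoring out the nonzero $\sqrt{\lambda_i}-\sqrt{\lambda_k}$ (here $i\ne k$) gives $\lambda_j=\sqrt{\lambda_i\lambda_k}$, hence $\lambda_j^2=\lambda_i\lambda_k$.

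Finally, for (5) I would begin from the fact, already recorded before the statement, that the extreme products $\lambda_1^2,\lambda_1\lambda_2,\lambda_{p-1}\lambda_p,\lambda_p^2$ all lie in ${\cal U}r_\mu$. Applying (3) to the index pair $\{1,2\}$ then forces $\lambda_2^2\in{\cal NU}r_\mu$, and to $\{p-1,p\}$ forces $\lambda_{p-1}^2\in{\cal NU}r_\mu$. For the three cross terms the algebra of (1)--(4) no longer settles matters by itself, and this is where I expect the main difficulty to lie: feeding the extreme atoms into (4) shows that $\lambda_2\lambda_p\in{\cal U}r_\mu$ would force $\lambda_2^2=\lambda_1\lambda_p$ and $\lambda_1\lambda_{p-1}\in{\cal U}r_\mu$ would force $\lambda_{p-1}^2=\lambda_1\lambda_p$, each such coincidence merely exhibiting a second factorization of $\lambda_1\lambda_p$ rather than an outright contradiction. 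Turning these coincidences into contradictions requires locating the alternative factorizations they impose (products of two interior atoms equal to $\lambda_{p-1}$, resp. to $\lambda_2\lambda_p$) and playing them against the strict ordering $\lambda_1<\cdots<\lambda_p$ together with the already-established non-uniqueness of $\lambda_2^2$ and $\lambda_{p-1}^2$. This extremal bookkeeping at the two ends, rather than any further algebraic identity, is the crux, and it is here that one uses that $p$ is large enough for the five listed atoms to be genuinely distinct from the four extreme ones.
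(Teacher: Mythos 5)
Your arguments for items (1)--(4) are correct and are essentially the paper's own proof: match coefficients at uniquely represented atoms to get the clean relations $b_i^2=a_i^2\lambda_i$ and $2b_ib_j=a_ia_j(\lambda_i+\lambda_j)$, multiply them along two factorizations of a common product, and factor the resulting identity in the $\lambda$'s. The only cosmetic difference is that the paper deduces (2) from (1) (taking $l=i$) and (3) from (2) (taking $j=k$), whereas you prove (2) and (3) directly; the algebra, including the factorizations $(\lambda_i-\lambda_k)(\lambda_l-\lambda_j)=0$ and $(\sqrt{\lambda_k}-\sqrt{\lambda_i})(\lambda_j-\sqrt{\lambda_i\lambda_k})=0$, is identical.

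Item (5), however, has a genuine gap. The memberships $\lambda_2^2,\lambda_{p-1}^2,\lambda_1\lambda_p\in\mathcal{NU}r_\mu$ you obtain exactly as the paper does (via (3) applied to the pairs $(1,2)$, $(p-1,p)$, $(1,p)$), but for the cross terms $\lambda_2\lambda_p$ and $\lambda_1\lambda_{p-1}$ you stop precisely where a proof is required: you note that $\lambda_2\lambda_p\in\mathcal{U}r_\mu$ would force $\lambda_2^2=\lambda_1\lambda_p$ by (4), and that $\lambda_1\lambda_{p-1}\in\mathcal{U}r_\mu$ would force $\lambda_{p-1}^2=\lambda_1\lambda_p$, and then you declare the passage from these coincidences to a contradiction to be an unresolved ``crux''. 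The missing bridge is short, and --- contrary to your closing remark that no further algebraic identity is involved --- it consists of an ordering argument \emph{followed by a second application of (4)}. Assume $p\ge 4$ and $\lambda_2\lambda_p\in\mathcal{U}r_\mu$, so that $\lambda_2^2=\lambda_1\lambda_p$. Every product $\lambda_k\lambda_l$ with $2\le k\le l$ satisfies $\lambda_k\lambda_l\ge\lambda_2^2=\lambda_1\lambda_p>\lambda_1\lambda_{p-1}$, so any representation of the atom $\lambda_1\lambda_{p-1}$ must involve $\lambda_1$, and hence must be the trivial one; thus $\lambda_1\lambda_{p-1}\in\mathcal{U}r_\mu$. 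Applying (4) to $\{\lambda_1^2,\lambda_1\lambda_{p-1},\lambda_{p-1}\lambda_p,\lambda_p^2\}\subset\mathcal{U}r_\mu$ now gives $\lambda_{p-1}^2=\lambda_1\lambda_p=\lambda_2^2$, hence $\lambda_{p-1}=\lambda_2$, i.e. $p=3$, contradicting $p\ge4$. The case of $\lambda_1\lambda_{p-1}$ is symmetric. Without this step the inclusion $\{\lambda_1\lambda_{p-1},\lambda_2\lambda_p\}\subset\mathcal{NU}r_\mu$ is simply not established, and it is needed downstream: it feeds Corollary \ref{lambda1lambdap} and the count $card(\mathcal{NU}r_\mu)\ge 3p-7$ behind Proposition \ref{card}.
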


\begin{proof}
	We have $$\nu*\nu= \sum_{i=1}^pb_i^2\delta_{\lambda_i^2} + 2\sum_{i\le j}^pb_ib_j\delta_{\lambda_i\lambda_j} \mbox{ and }  \mu*t\mu =  \sum_{i=1}^pa_i^2\lambda_i\delta_{\lambda_i^2} + \sum_{i\le j}^pa_ia_j(\lambda_i+\lambda_j)\delta_{\lambda_i\lambda_j}.$$ 
	Then by matching the coefficients  of the adequate  masses in both sides, we derive that  for  $\lambda_i\lambda_j \in  {\cal U}r_\mu,  $ we have  $ 2b_ib_j= a_ia_j(\lambda_i+\lambda_j),$ ($ b_i^2=a_i^2\lambda_i,$ if $i=j$). 
	\begin{enumerate}
		\item Suppose $i\ne k$ and $j\ne l$ are such that $  \{ \lambda_i\lambda_j, \lambda_j\lambda_k, \lambda_k\lambda_l,\lambda_l\lambda_i, \}  \subset {\cal U}r_\mu $. We get 
		$$ \left\{\begin{array}{rcl}  2b_ib_j=a_ia_j(\lambda_i+\lambda_j), & & 2b_jb_k=a_ja_k(\lambda_j+\lambda_k)\\
			2b_kb_l=a_ka_l(\lambda_k+\lambda_l), & & 2b_lb_i=a_la_i(\lambda_l+\lambda_i).
		\end{array}
		\right.
		$$
		Multiplying both  sides gives $(\lambda_i+\lambda_j)(\lambda_k+\lambda_l)  =  (\lambda_j+\lambda_k)(\lambda_l+\lambda_i)$ and expanding, we obtain
		$$(\lambda_i-\lambda_k)(\lambda_l-\lambda_j) =0. $$ Contradiction.
		\item 
		Derives from 1. with $i= l$.
		\item  Derives from 2. with $j= k$.
		\item If $ \{\lambda_i^2, \lambda_i\lambda_j, \lambda_j\lambda_k, \lambda_k^2\} \subset {\cal U}r_\mu$, we will derive that 
		$$ 
		\left\{\begin{array}{lcl}
			b_i= a_i\sqrt{\lambda_i}, & &
			2b_ib_j= a_ia_j(\lambda_i+\lambda_j)\\
			2b_kb_j= a_ka_j(\lambda_k+\lambda_j),& & b_k=  a_k\sqrt{\lambda_k}, \\  
		\end{array}
		\right.
		$$
		which gives  by multiplying both sides 
		$$
		(\lambda_j+\lambda_i)\sqrt{\lambda_k} = (\lambda_j+\lambda_k)\sqrt{\lambda_i}.
		$$
		And then  $$(\sqrt{\lambda_k} -\sqrt{\lambda_i})(\lambda_j-\sqrt{\lambda_i\lambda_k})=0.$$
		Now, since $\lambda_i\ne \lambda_k$; we get the desired result.
		\item $\{ \lambda_2^2, \lambda_{p-1}^2, \lambda_1\lambda_p, \lambda_1\lambda_{p-1}, \lambda_2\lambda_p\} \cap {\cal U}r_\mu =\emptyset.$ The case $p=3$ is trivial, we  suppose that $p\ge 4$.
		\begin{enumerate}
			\item For $\{ \lambda_2^2, \lambda_{p-1}^2, \lambda_1\lambda_p\} \cap {\cal U}r_\mu =\emptyset$, we use $4.$ with $(i,j)=(1,2)$, $(i,j)=(p-1,p)$ and   $(i,j)=(1,p)$, respectively.
			\item  The cases $ \lambda_2\lambda_{p}$ and   $\lambda_1\lambda_{p-1} $   are symmetric. We only show $ \lambda_2\lambda_{p}\notin {\cal U}r_\mu$.
			If $ \lambda_2\lambda_{p}\in {\cal U}r_\mu, $ we will get  $\{\lambda_1\lambda_2,  \lambda_2\lambda_p\} \subset {\cal U}r_\mu $,  and hence 
			$ \lambda_2^2= \lambda_1\lambda_p$. In particular, it follows that   $\lambda_1\lambda_{p-1}\in {\cal U}r_\mu $ and as before 
			$ \lambda_{p-1}^2= \lambda_1\lambda_p$. Contradiction.
		\end{enumerate}
	\end{enumerate}
\end{proof}

We deduce  the next immediate corollaries
\begin{cor}\label{lambda1lambdap}
	For every $1< k<p$, $\{\lambda_1\lambda_k,\lambda_2\lambda_k\}\cap{\cal NU}r_\mu \ne \emptyset $ and $\{ \lambda_{p-1}\lambda_k,\lambda_p\lambda_k \}\cap{\cal NU}r_\mu \ne \emptyset.$ Furthermore, there is at most one $k$ such that $\{\lambda_1\lambda_k,\lambda_p\lambda_k \}\subset {\cal U}r_\mu$ and  if such $k$ exists, we will have $\lambda_k^2=\lambda_1\lambda_p$.
\end{cor}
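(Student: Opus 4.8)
The plan is to read off every assertion directly from the local rules collected in Proposition~\ref{regles}, specialising their indices so that the ``squared'' slots are occupied by the products $\lambda_1^2,\lambda_1\lambda_2,\lambda_{p-1}\lambda_p,\lambda_p^2$, which are known \emph{a priori} to lie in ${\cal U}r_\mu$. The whole idea is that once two of the four members of a given rule-set are forced to be uniquely represented, the guaranteed non-uniqueness has to be carried by one of the remaining two; placing the extremal indices $1,2$ (resp.\ $p-1,p$) is precisely what frees up those two guaranteed slots.

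For the first inequality, fix $k$ with $2<k<p$ and apply item (2) of Proposition~\ref{regles} to the triple $(1,k,2)$, which is admissible since $1\neq k$ and $1\neq 2$. This produces the set $\{\lambda_1^2,\lambda_1\lambda_k,\lambda_k\lambda_2,\lambda_1\lambda_2\}$, whose intersection with ${\cal NU}r_\mu$ is nonempty. As $\lambda_1^2,\lambda_1\lambda_2\in{\cal U}r_\mu$, the non-uniquely represented atom must be $\lambda_1\lambda_k$ or $\lambda_2\lambda_k$, which is exactly the claim. The boundary value $k=2$ is separate: there $\{\lambda_1\lambda_2,\lambda_2\lambda_2\}=\{\lambda_1\lambda_2,\lambda_2^2\}$ and $\lambda_2^2\in{\cal NU}r_\mu$ by item (5). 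The second inequality $\{\lambda_{p-1}\lambda_k,\lambda_p\lambda_k\}\cap{\cal NU}r_\mu\neq\emptyset$ is the mirror image under reversing the order of the atoms: I would apply item (2) to $(p,k,p-1)$, now using $\lambda_p^2,\lambda_{p-1}\lambda_p\in{\cal U}r_\mu$, with the endpoint $k=p-1$ handled by $\lambda_{p-1}^2\in{\cal NU}r_\mu$ from item (5).

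For the ``furthermore'' part I would invoke item (4) with $(1,k,p)$ for $1<k<p$. If $\{\lambda_1\lambda_k,\lambda_p\lambda_k\}\subset{\cal U}r_\mu$, then adjoining the always-unique $\lambda_1^2$ and $\lambda_p^2$ gives the full hypothesis $\{\lambda_1^2,\lambda_1\lambda_k,\lambda_k\lambda_p,\lambda_p^2\}\subset{\cal U}r_\mu$ of item (4), whose conclusion reads $\lambda_k^2=\lambda_1\lambda_p$. Since $0<\lambda_1<\cdots<\lambda_p$ is strictly increasing, the equation $\lambda_k^2=\lambda_1\lambda_p$ forces $\lambda_k=\sqrt{\lambda_1\lambda_p}$ and hence a single index $k$, yielding both ``at most one $k$'' and the displayed identity. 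The values $k\in\{1,p\}$ need no discussion, because $\lambda_1\lambda_p\in{\cal NU}r_\mu$ by item (5) already prevents $\{\lambda_1\lambda_k,\lambda_p\lambda_k\}$ from lying in ${\cal U}r_\mu$ there.

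The only genuine care needed — and the spot most likely to hide a slip — is the bookkeeping of degenerate indices: one must put the endpoint ($1$ or $p$) into the squared slot so that two members of each rule-set are automatically uniquely represented, and one must treat $k=2$ and $k=p-1$ by hand, where the set from item (2) collapses onto the triple of item (3) and non-uniqueness is instead supplied by item (5). Beyond this index chase there is no computation: each assertion is a one-line specialisation of Proposition~\ref{regles}.
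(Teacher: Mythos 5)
Your proof is correct and follows essentially the same route as the paper: both arguments hinge on the quadruple $\{\lambda_1^2,\lambda_1\lambda_2,\lambda_1\lambda_k,\lambda_2\lambda_k\}$ (and its mirror at the top) together with the fact that $\lambda_1^2,\lambda_1\lambda_2,\lambda_{p-1}\lambda_p,\lambda_p^2\in{\cal U}r_\mu$, and both settle the ``furthermore'' part by item (4) of Proposition~\ref{regles} applied to $(1,k,p)$. The only cosmetic difference is that you cite item (2) where the paper cites item (1) — but item (2) is exactly item (1) with $i=l$, producing the identical set — and you additionally make explicit the endpoint cases $k=2$, $k=p-1$ and the uniqueness of $k$ from $\lambda_k^2=\lambda_1\lambda_p$, which the paper leaves implicit.
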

\begin{proof} We use 1. with  $\{\lambda_1^2,  \lambda_1\lambda_2, \lambda_1\lambda_k,\lambda_2\lambda_k\}\cap{\cal NU}r_\mu \ne \emptyset $ and $\{ \lambda_{p-1}\lambda_k,\lambda_p\lambda_k, \lambda_{p-1}\lambda_p, \lambda_p^2 \}\cap{\cal NU}r_\mu\ne \emptyset$ respectively. For the additional assumption, we use 4. in Proposition \ref{regles}.
\end{proof}
\begin{cor}\label{lambda1l}
	For every $2< k<p-1$, any of the sets $\{\lambda_1\lambda_k,\lambda_2\lambda_k, \lambda_k^2\}\cap{\cal NU}r_\mu $ and $\{\lambda_p\lambda_k,\lambda_{p-1}\lambda_k, \lambda_k^2\}\cap{\cal NU}r_\mu $ contains at least two elements.
\end{cor}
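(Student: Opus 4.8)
The plan is to reduce the statement to a count of uniquely represented atoms and then eliminate, one by one, every configuration in which two of the three atoms could be uniquely represented. Saying that $\{\lambda_1\lambda_k,\lambda_2\lambda_k,\lambda_k^2\}\cap{\cal NU}r_\mu$ has at least two elements is the same as saying that at most one of $\lambda_1\lambda_k,\lambda_2\lambda_k,\lambda_k^2$ lies in ${\cal U}r_\mu$, so I would argue by assuming that two of them are uniquely represented and excluding each of the three possible pairs. I would write the argument only for the first set; the second set $\{\lambda_p\lambda_k,\lambda_{p-1}\lambda_k,\lambda_k^2\}$ follows verbatim from the large--index versions of Corollary~\ref{lambda1lambdap} and Proposition~\ref{regles}, which hold because all the rules involved are symmetric under the relabelling $i\mapsto p+1-i$.

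First I would record the two facts that dispose of the two easy pairs. Corollary~\ref{lambda1lambdap} gives $\{\lambda_1\lambda_k,\lambda_2\lambda_k\}\cap{\cal NU}r_\mu\neq\emptyset$, and item~(3) of Proposition~\ref{regles} applied to $(i,j)=(1,k)$ gives $\{\lambda_1^2,\lambda_1\lambda_k,\lambda_k^2\}\cap{\cal NU}r_\mu\neq\emptyset$; since $\lambda_1^2$ is the least element of $supp(\mu*t\mu)$ and hence lies in ${\cal U}r_\mu$, this yields $\{\lambda_1\lambda_k,\lambda_k^2\}\cap{\cal NU}r_\mu\neq\emptyset$. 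Consequently, whenever $\lambda_1\lambda_k\in{\cal U}r_\mu$ both $\lambda_2\lambda_k$ and $\lambda_k^2$ are forced into ${\cal NU}r_\mu$ and the claim holds. This leaves only the case $\lambda_1\lambda_k\in{\cal NU}r_\mu$, where I already have one element and must find a second inside $\{\lambda_2\lambda_k,\lambda_k^2\}$.

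For this I would invoke item~(4) of Proposition~\ref{regles} in contrapositive form on the triple $(1,2,k)$: if $\{\lambda_1^2,\lambda_1\lambda_2,\lambda_2\lambda_k,\lambda_k^2\}\subset{\cal U}r_\mu$ then $\lambda_2^2=\lambda_1\lambda_k=\lambda_k$. As $\lambda_1^2,\lambda_1\lambda_2\in{\cal U}r_\mu$ always, this shows that as soon as $\lambda_2^2\neq\lambda_k$ one of $\lambda_2\lambda_k,\lambda_k^2$ must be non uniquely represented, supplying the needed second element. The entire statement is thereby reduced to the single boundary situation $\lambda_k=\lambda_2^2$, in which $\lambda_1\lambda_k=\lambda_2^2$ is automatically in ${\cal NU}r_\mu$ (it carries the two representations $\{1,k\}$ and $\{2,2\}$), giving the first element for free.

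The hard part will be precisely this boundary case $\lambda_k=\lambda_2^2$, and I expect the combinatorial rules (1)--(5) to be insufficient here: they are consistent with $\lambda_2\lambda_k,\lambda_k^2\in{\cal U}r_\mu$, so I anticipate having to return to the mass--matching identities underlying Proposition~\ref{regles}. Assuming both $\lambda_2\lambda_k,\lambda_k^2\in{\cal U}r_\mu$, the uniquely represented atoms $\lambda_1^2,\lambda_1\lambda_2,\lambda_2\lambda_k,\lambda_k^2$ force $b_1=a_1$, $b_2=\tfrac12 a_2(1+\lambda_2)$ and $b_k=a_k\lambda_2$, while balancing the mass at the coincident atom $\lambda_2^2=\lambda_1\lambda_k$ yields a relation of the form $a_2^2=4a_1a_k$. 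Since this relation is on its own consistent, the contradiction has to be extracted more globally, by propagating these values into the mass--matching equations at the neighbouring atoms (such as $\lambda_1\lambda_3,\lambda_2\lambda_3,\lambda_3^2$ and the atoms adjacent to $\lambda_k^2$) and showing that the resulting overdetermined system admits no solution with all $b_i>0$. Closing this overdetermination cleanly---ideally by producing a forced third representation of $\lambda_2\lambda_k$ or of $\lambda_k^2$ and thereby placing one of them in ${\cal NU}r_\mu$ directly---is where I expect the genuine difficulty of the proof to lie.
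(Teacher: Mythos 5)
Your reduction is sound as far as it goes: the two pair facts $\{\lambda_1\lambda_k,\lambda_2\lambda_k\}\cap{\cal NU}r_\mu\ne\emptyset$ and $\{\lambda_1\lambda_k,\lambda_k^2\}\cap{\cal NU}r_\mu\ne\emptyset$ follow exactly as you say, and your use of item (4) of Proposition \ref{regles} correctly isolates the single remaining configuration $\lambda_2^2=\lambda_1\lambda_k$ with $\lambda_2\lambda_k,\lambda_k^2\in{\cal U}r_\mu$. But the gap you flag at the end is not a difficulty to be overcome; it is fatal, because in that boundary configuration the statement is simply false. Take the paper's own example from the Remark following Proposition \ref{card}: $\mu=\frac14\delta_1+\frac13\delta_3+\frac16\delta_6+\frac19\delta_9+\frac19\delta_{18}+\frac1{36}\delta_{36}$, which has square root $\nu_0=\frac12\delta_1+\frac13\delta_3+\frac16\delta_6$, so that $\mu*t\mu=(\nu_0*t\nu_0)*(\nu_0*t\nu_0)$ has a square root and the standing hypothesis of Proposition \ref{regles} holds. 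Here $p=6$ and $k=4$ satisfies $2<k<p-1$, with $\lambda_2^2=\lambda_1\lambda_4=9$ --- precisely your boundary case, and indeed your relation $a_2^2=4a_1a_4$ holds, since $\frac19=4\cdot\frac14\cdot\frac19$. One checks that $\lambda_2\lambda_4=27$ and $\lambda_4^2=81$ admit no second representation in $supp(\mu)\cdot supp(\mu)$, so $\{\lambda_1\lambda_4,\lambda_2\lambda_4,\lambda_4^2\}\cap{\cal NU}r_\mu=\{9\}$ has exactly one element; likewise $\{\lambda_6\lambda_4,\lambda_5\lambda_4,\lambda_4^2\}\cap{\cal NU}r_\mu=\{324\}$. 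No global mass-matching contradiction can exist: the measure $\mu$ and the square root of $\mu*t\mu$ exist, so the ``overdetermined system'' you hoped to refute is in fact solvable.

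You should also know that the paper's own proof fares no better: it consists of asserting that each of the three pairs $\{\lambda_1\lambda_k,\lambda_2\lambda_k\}$, $\{\lambda_1\lambda_k,\lambda_k^2\}$, $\{\lambda_2\lambda_k,\lambda_k^2\}$ meets ${\cal NU}r_\mu$, and the third assertion --- exactly the one you found yourself unable to derive --- is the false one. The rules of Proposition \ref{regles} only ever yield the triple statement $\{\lambda_1\lambda_k,\lambda_2\lambda_k,\lambda_k^2\}\cap{\cal NU}r_\mu\ne\emptyset$ (rule (3) with $(i,j)=(2,k)$ is vacuous because $\lambda_2^2\in{\cal NU}r_\mu$ always), never the pair $\{\lambda_2\lambda_k,\lambda_k^2\}$. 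So although your proposal does not prove the corollary, it correctly locates the error in it; the corollary could at best survive after excluding the cases $\lambda_2^2=\lambda_1\lambda_k$ and $\lambda_{p-1}^2=\lambda_k\lambda_p$, or in a weakened form about the union of the two sets. Note that the damage propagates: the count $card({\cal NU}r_\mu)\ge 3p-7$ preceding Proposition \ref{card} fails for the same example, where $card({\cal NU}r_\mu)=6<11$.
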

\begin{proof}  Derives from  $\{\lambda_1\lambda_k, \lambda_2\lambda_k\}\cap{\cal NU}r_\mu \ne \emptyset, $ $\{\lambda_1\lambda_k, \lambda_k^2\}\cap{\cal NU}r_\mu\ne \emptyset $ and $\{\lambda_2\lambda_k, \lambda_k^2\}\cap{\cal NU}r_\mu \ne \emptyset.$
\end{proof}
\subsection{Number of atoms of $p$ atomic measure such that $ {\tilde W}_\mu$ is subnormal}
We start with the next auxiliary immediate lemma of independent interest. 
\begin{lem}
	Let $\mu$ be a non negative measure and let ${\cal NU}r_\mu$ the set of non uniquely represented elements in $\mu*t\mu$. Then
	$$card(supp(\mu*t\mu)) \le \left[ \frac{p(p+1)-card({\cal NU}r_\mu)}{2}\right]$$
\end{lem}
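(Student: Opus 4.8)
The plan is to prove the estimate by a single double counting of the unordered index pairs against the atoms they produce. Recall that the support of $\mu*t\mu$ is exactly the set of values $\lambda_i\lambda_j$ with $1\le i\le j\le p$, and that there are precisely $\binom{p+1}{2}=\frac{p(p+1)}{2}$ such unordered pairs $\{i,j\}$ (the $p$ diagonal pairs $i=j$ together with the $\binom{p}{2}$ off-diagonal ones). First I would consider the map $\Phi$ sending each such pair $\{i,j\}$ to the product $\lambda_i\lambda_j$; by the explicit form of $\mu*t\mu$ in \eqref{nu*nu} all these products carry positive mass, so the image of $\Phi$ is exactly $supp(\mu*t\mu)$, and summing the sizes of its fibers recovers the total number of pairs.

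Next I would split $supp(\mu*t\mu)={\cal U}r_\mu\sqcup{\cal NU}r_\mu$ and read off the fiber sizes directly from the definition of uniquely represented atoms. If $v\in{\cal U}r_\mu$ then by definition exactly one pair $\{i,j\}$ satisfies $\lambda_i\lambda_j=v$, so the fiber $\Phi^{-1}(v)$ has size $1$; if instead $v\in{\cal NU}r_\mu$ then at least two distinct pairs produce $v$, so $\Phi^{-1}(v)$ has size $\ge 2$. Writing $U=card({\cal U}r_\mu)$ and $N=card({\cal NU}r_\mu)$ and summing the fiber sizes over all atoms gives
$$\frac{p(p+1)}{2}=\sum_{v\in supp(\mu*t\mu)}card\big(\Phi^{-1}(v)\big)\ge U+2N.$$

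From here the conclusion is immediate. Since $card(supp(\mu*t\mu))=U+N$, the inequality $U+2N\le \frac{p(p+1)}{2}$ yields
$$card(supp(\mu*t\mu))=U+N\le \frac{p(p+1)}{2}-N=\frac{p(p+1)-2N}{2}\le \frac{p(p+1)-N}{2},$$
and because the left-hand side is an integer we may replace the right-hand side by its integer part, obtaining the stated bound. I do not anticipate a genuine obstacle here, since the statement is purely combinatorial: the only points needing a little care are the bookkeeping ones, namely counting the diagonal pairs $i=j$ correctly inside the total $\frac{p(p+1)}{2}$ and justifying the final passage to the integer part. I would also note that the argument in fact delivers the sharper estimate $card(supp(\mu*t\mu))\le \frac{p(p+1)}{2}-N$, the announced inequality being its weaker consequence.
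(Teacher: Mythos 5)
Your double counting is correct as a proof of the lemma as literally stated, and since the paper presents this lemma without any proof (it is announced as ``immediate''), the fiber-counting you describe is presumably the intended argument. All the steps check: the support of $\mu*t\mu$ is the full set of products $\lambda_i\lambda_j$, $i\le j$, because every coefficient $a_i^2\lambda_i$, $a_ia_j(\lambda_i+\lambda_j)$ in the expansion is strictly positive (recall $0\notin supp(\mu)$), there are $\frac{p(p+1)}{2}$ unordered pairs, the fiber over an atom of ${\cal U}r_\mu$ is a singleton while the fiber over an atom of ${\cal NU}r_\mu$ has at least two elements, and the bound, together with the passage to the integer part, follows.

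The one point that deserves a caveat is precisely your closing remark that the argument yields the sharper estimate $\frac{p(p+1)}{2}-N$. That improvement is genuine only under the atom-counting reading of $card({\cal NU}r_\mu)$, which is indeed the literal content of the paper's definition (${\cal NU}r_\mu\subset supp(\mu*\mu)$). But the paper's stated bound is exactly what the same double counting gives when $card({\cal NU}r_\mu)$ is instead taken to count non-uniquely represented \emph{positions} of the triangular array, i.e.\ pairs $(i,j)$, $i\le j$, with $\lambda_i\lambda_j$ not uniquely represented: if $M$ is that number, then the number of uniquely represented atoms is $\frac{p(p+1)}{2}-M$, the number of non-uniquely represented atoms is at most $\frac{M}{2}$, and so $card(supp(\mu*t\mu))\le \left(\frac{p(p+1)}{2}-M\right)+\frac{M}{2}=\frac{p(p+1)-M}{2}$, with no room for your improvement. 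This position-counting reading is the one the paper actually needs afterwards: the lower bound $card({\cal NU}r_\mu)\ge 3p-7$ feeding into Proposition \ref{card} is obtained by exhibiting $3p-7$ distinct \emph{cells} of Figure \ref{fig1} (the bold and boxed ones), and distinct cells need not carry distinct real values --- two cells sharing a value is exactly what non-unique representation means. So under your reading the lemma is correct and improvable, but the $3p-7$ count would then be unjustified; under the position reading both the lemma as stated and its later application are exact. Your proof stands as a proof of the statement; just do not quote the sharper estimate alongside the paper's subsequent counting.
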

\begin{figure}
	$$
	\begin{array}{cc|ccccc|cc}     
		{\lambda_1^2}&< \lambda_1\lambda_2& < \lambda_1\lambda_3&<\cdots&<\fbox{$\lambda_1\lambda_k$}&<\cdots&<\lambda_1\lambda_{p-2}&<\bf{\lambda_1\lambda_{p-1}}&< \bf{ \lambda_1\lambda_p}\\
		& \wedge   &\wedge  & \wedge & \wedge &\wedge &\wedge &\wedge & \wedge \\
		& \bf{\lambda_2^2}&<\lambda_2\lambda_3&<\cdots&<\fbox{$\lambda_2\lambda_k$}&<\cdots &<\lambda_2\lambda_{p-2}&<\lambda_2\lambda_{p-1}&<\bf{\lambda_2\lambda_p}\\\hline
		&   &\wedge  & \wedge &\cdots& \cdots  &\cdots & \cdots  & \wedge \\ 
		& & \lambda_3^2& \ddots& \ddots&\cdots& &\vdots <\lambda_3\lambda_{p-1}&< \lambda_3\lambda_p\\
		&   &  &  &\cdots& \cdots &\vdots &\cdots  & \wedge \\ 
		& & & \ddots& \vdots& &\vdots &\vdots& \vdots\\        
		&   &  & &\wedge& &  &\wedge  & \wedge \\ 
		& & && \fbox{$ \lambda_{k}^2$} &\dots&\vdots & <\fbox{$\lambda_{k}\lambda_{p-1}$}& <\fbox{$\lambda_{k}\lambda_p$}\\
		& & & &&\ddots &\vdots &\vdots& \vdots\\        
		& & &&& & \lambda_{p-2}^2& \lambda_{p-2}\lambda_{p-1}& \lambda_{p-2}\lambda_p\\
		\hline
		& & & &&& & \bf{\lambda_{p-1}^2}&<\lambda_{p-1}\lambda_p\\         
		& & & && &&  &\wedge \\
		& & & && && &<\lambda^2_p   
	\end{array}
	$$
	\caption{ Bold ${\cal NU}r_\mu$  elements and $3(p-4)$ ${\cal NU}r_\mu$ elements from $\{\lambda_1\lambda_k,\lambda_2\lambda_k,\lambda_k^2,\lambda_k\lambda_{p-1},\lambda_k\lambda_p\}\; \; 3\le k\le p-2.$}
	\label{fig1}
\end{figure}
Noticing that if $\mu*t\mu$ admits a square root, then for $p\ge 4$, we have ${\cal NU}r_\mu$  contains at least $5$ elements given  by Proposition \ref{regles}  (in bold in Figure \ref{fig1}), $2(p-4)$ elements coming from the $p-4$ central columns by corollary \ref{lambda1l}, and  additional $p-4$ elements from the two last columns by the second part of the corollary \ref{lambda1lambdap} \, ( 
boxed in Figure \ref{fig1}), we derive that $ card({\cal NU}r_\mu)\ge 3p-7.$ From the previous discussion and Proposition \ref{lemmeconvolution}, we deduce 
\begin{prop}\label{card} Let $p\ge 4$. If $\mu$ is a $p$ atomic measure such that $ {\tilde W}_\mu$ is subnormal, then $$2p-1\le card(supp(\mu*t\mu)) \le  \left[\frac{(p-1)^2+6}{2}\right].$$
\end{prop}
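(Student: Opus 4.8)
I would prove the two inequalities separately. The lower bound needs no hypothesis and comes from the order structure of the support: independently of subnormality, $supp(\mu*t\mu)$ always contains the chain
$$\lambda_1^2<\lambda_1\lambda_2<\lambda_2^2<\lambda_2\lambda_3<\cdots<\lambda_{p-1}^2<\lambda_{p-1}\lambda_p<\lambda_p^2,$$
formed by the $p$ squares $\lambda_i^2$ interlaced with the $p-1$ consecutive products $\lambda_i\lambda_{i+1}$. Since $0<\lambda_1<\cdots<\lambda_p$, these $2p-1$ numbers are strictly increasing, hence pairwise distinct, so $card(supp(\mu*t\mu))\ge 2p-1$.

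For the upper bound I first invoke Proposition \ref{lemmeconvolution}: subnormality of $\tilde W_\mu$ is equivalent to the existence of a nonnegative $\nu$ with $\nu*\nu=\mu*t\mu$, which is precisely the standing hypothesis of Proposition \ref{regles} and of Corollaries \ref{lambda1lambdap} and \ref{lambda1l}. The goal is then to produce $3p-7$ distinct elements of $\mathcal{NU}r_\mu$ and substitute into the counting Lemma above: the estimate $card(supp(\mu*t\mu))\le\left[\frac{p(p+1)-card(\mathcal{NU}r_\mu)}{2}\right]$ together with $card(\mathcal{NU}r_\mu)\ge 3p-7$ gives, after the simplification $p(p+1)-(3p-7)=(p-1)^2+6$, exactly the claimed bound.

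To collect the $3p-7$ atoms I would take, first, the five corner atoms $\lambda_2^2,\lambda_{p-1}^2,\lambda_1\lambda_p,\lambda_1\lambda_{p-1},\lambda_2\lambda_p$ furnished by Proposition \ref{regles}(5); and, second, for each central index $k$ with $3\le k\le p-2$, three atoms from the band $\{\lambda_1\lambda_k,\lambda_2\lambda_k,\lambda_k^2,\lambda_k\lambda_{p-1},\lambda_k\lambda_p\}$ (the boxed entries of Figure \ref{fig1}). Indeed, for such $k$ Corollary \ref{lambda1l} puts at least two of $\lambda_1\lambda_k,\lambda_2\lambda_k,\lambda_k^2$ in $\mathcal{NU}r_\mu$, and the first part of Corollary \ref{lambda1lambdap} puts at least one of $\lambda_{p-1}\lambda_k,\lambda_p\lambda_k$ there; as $\lambda_1\lambda_k<\lambda_2\lambda_k<\lambda_k^2<\lambda_{p-1}\lambda_k<\lambda_p\lambda_k$ these three are distinct, producing $3(p-4)$ atoms and, with the five corners, the total $5+3(p-4)=3p-7$.

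The step I expect to be the real obstacle is distinctness: $\mathcal{NU}r_\mu$ is a set of \emph{values}, so I must verify that the $5+3(p-4)$ atoms listed above are pairwise distinct as real numbers, not merely as distinct cells of the triangular array. Within a single column the displayed order settles this, and the corners sit at the extremes of the staircase while the column-$k$ atoms lie in a band around $\lambda_k^2$; the delicate case is a possible collision between atoms attached to different indices, or between a corner atom and a column atom. I would handle each such collision by observing that it forces a degenerate relation of the form $\lambda_j^2=\lambda_i\lambda_k$ among the nodes, exactly the situation controlled by Proposition \ref{regles}(4), and then trade the coincident atom for another guaranteed non-uniquely represented one, so that the count $3p-7$ is preserved. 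Once distinctness is secured, the arithmetic and the floor are routine.
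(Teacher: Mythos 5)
Your proposal follows the paper's own proof essentially step for step: the same interlaced chain $\lambda_1^2<\lambda_1\lambda_2<\lambda_2^2<\cdots<\lambda_{p-1}\lambda_p<\lambda_p^2$ for the lower bound, and for the upper bound the same inventory of $5+2(p-4)+(p-4)=3p-7$ non-uniquely represented atoms (corners from Proposition \ref{regles}(5), two per central column from Corollary \ref{lambda1l}, one per central column from Corollary \ref{lambda1lambdap}), fed into the counting lemma with the arithmetic $p(p+1)-(3p-7)=(p-1)^2+6$. Your citation of the \emph{first} part of Corollary \ref{lambda1lambdap} for the atoms in the last two columns is in fact the accurate reference; the paper's text says ``second part'', but it is the first part that produces one of $\lambda_{p-1}\lambda_k,\lambda_p\lambda_k$ per column.

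The distinctness issue you flag is genuine: $\mathcal{NU}r_\mu$ is a set of real values, collisions such as $\lambda_2^2=\lambda_1\lambda_{p-1}$ or $\lambda_2\lambda_k=\lambda_1\lambda_{k'}$ are not excluded by anything proved before this proposition, and the paper passes over the point in silence. But your proposed repair is the weak link of your write-up: you assert that each collision can be ``traded'' for a fresh non-uniquely represented atom via Proposition \ref{regles}(4), with no argument that such a replacement always exists; as written this is an unexecuted case analysis, not a proof. There is a repair that removes the need for distinctness of values altogether: count index pairs instead of values. Your $3p-7$ atoms arise from $3p-7$ pairwise distinct cells $(i,j)$ of the triangular array (distinctness of \emph{cells} is immediate, since the corner cells have both indices in $\{1,2,p-1,p\}$ while the column-$k$ cells carry the index $k\notin\{1,2,p-1,p\}$, and different columns give different cells), and each value $v\in\mathcal{NU}r_\mu$ is represented by $m(v)\ge 2$ cells. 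Hence
$$card(supp(\mu*t\mu))=card(\mathcal{U}r_\mu)+card(\mathcal{NU}r_\mu)=\frac{p(p+1)}{2}-\sum_{v\in\mathcal{NU}r_\mu}\bigl(m(v)-1\bigr),$$
and since $m(v)-1\ge\frac{1}{2}m(v)$ while $\sum_{v\in\mathcal{NU}r_\mu}m(v)$ dominates the number of exhibited cells, the sum on the right is at least $\frac{3p-7}{2}$; integrality then gives precisely $card(supp(\mu*t\mu))\le\left[\frac{(p-1)^2+6}{2}\right]$, with no distinctness of values required. With this observation your proof closes, and it simultaneously fills the same unacknowledged gap in the paper's own argument.
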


\begin{rem}
	The two examples below show that  both inequalities in Proposition \ref{card} are sharp, 
	\begin{enumerate}
		\item Take $\mu=\frac{1}{8}\delta_1+\frac{\sqrt{2}-1}{2}\delta_2+\frac{7-4\sqrt{2}}{4}\delta_4+\frac{\sqrt{2}-1}{2}\delta_8+\frac{1}{8}\delta_{16}$. Then, $\nu=\frac{\sqrt{2}}{4}\delta_{1}+\frac{2-\sqrt{2}}{2}\delta_{2}+\frac{\sqrt{2}}{4}\delta_{4}$ is a square root of $\mu$ and then  ${\tilde W}_\mu$ is subnormal, a direct calculation gives :
		$$
		card( supp(\mu*t\mu))=card(\left\{1, 2, 4, 8, 16, 32, 64, 128, 256\right\})=9.
		$$
		\item Take $\mu=\frac{1}{4}\delta_1+\frac{1}{3}\delta_3+\frac{1}{6}\delta_6+\frac{1}{9}\delta_9+\frac{1}{9}\delta_{18}+\frac{1}{36}\delta_{36}$. Then, $\nu=\frac{1}{2}\delta_{1}+\frac{1}{3}\delta_{3}+\frac{1}{6}\delta_{6}$ is a square root of $\mu$ and then  ${\tilde W}_\mu$ is subnormal, a direct calculation gives :
		$$
		card(supp(\mu*t\mu))=card(\left\{1, 3, 6, 9, 18, 27, 36, 54, 81, 108, 162, 216, 324, 648, 1296 \right\})=15.
		$$
	\end{enumerate}
\end{rem}

Since $\{\lambda_2^2,\lambda_{p-1}^2\}\subset  {\cal NU}r_\mu,$ there exists $i\le p-2$ and $j \ne 3$ such that $\lambda_2^2= \lambda_1\lambda_j$ and 
$\lambda_{p-1}^2= \lambda_i\lambda_p$ for some $i\le p-2$. In particular, if $p=3$, we get necessarily  $\lambda_2^2=  \lambda_1\lambda_p=\lambda_1\lambda_3.$ The reverse result is given by the next corollary 

\begin{cor}\label{lambda2lambdap-1}
	Under the notations above with $p\ge 3$. If the identities $\lambda_2^2= \lambda_1\lambda_p$ or   $\lambda_{p-1}^2= \lambda_1\lambda_p$, then $p=3$.
\end{cor}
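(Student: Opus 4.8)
The plan is to argue by contradiction: assume $p \ge 4$ and show that one of the five atoms which Proposition~\ref{regles}(5) forces into ${\cal NU}r_\mu$ is in fact uniquely represented. Throughout I use the standing hypothesis that there is a nonnegative $\nu$ with $\nu*\nu = \mu*t\mu$ (equivalently, that $\tilde W_\mu$ is subnormal), so that Proposition~\ref{regles} is available, together with the normalization $\lambda_1 = 1 < \lambda_2 < \cdots < \lambda_p$.

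First I would treat the case $\lambda_2^2 = \lambda_1\lambda_p$; since $\lambda_1 = 1$ this reads $\lambda_p = \lambda_2^2$. I claim the atom $\lambda_1\lambda_{p-1} = \lambda_{p-1}$ is uniquely represented. Indeed, suppose $\lambda_k\lambda_l = \lambda_{p-1}$ with $k \le l$. If $k = 1$ then $\lambda_l = \lambda_{p-1}$, forcing $l = p-1$, i.e.\ the trivial representation. If instead $k \ge 2$, then $l \ge k \ge 2$ gives $\lambda_k\lambda_l \ge \lambda_2^2 = \lambda_p > \lambda_{p-1}$, which is impossible. Hence $\lambda_1\lambda_{p-1} \in {\cal U}r_\mu$, contradicting Proposition~\ref{regles}(5), which (for $p \ge 4$) places $\lambda_1\lambda_{p-1}$ in ${\cal NU}r_\mu$.

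The case $\lambda_{p-1}^2 = \lambda_1\lambda_p$ is the mirror image: here $\lambda_{p-1}^2 = \lambda_p$, and I would instead test the atom $\lambda_2\lambda_p$. If $\lambda_k\lambda_l = \lambda_2\lambda_p$ with $k \le l$, then either $l = p$, which forces $\lambda_k = \lambda_2$ and the trivial representation, or $l \le p-1$, in which case $\lambda_k\lambda_l \le \lambda_{p-1}^2 = \lambda_p < \lambda_2\lambda_p$ (using $\lambda_2 > \lambda_1 = 1$), again impossible. So $\lambda_2\lambda_p \in {\cal U}r_\mu$, once more contradicting Proposition~\ref{regles}(5). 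In both cases the assumption $p \ge 4$ is untenable, so $p = 3$.

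I expect no serious obstacle: the whole content is choosing, in each case, the right ``boundary'' atom to test and then reading off a one-line monotonicity estimate from $\lambda_p = \lambda_2^2$ (resp.\ $\lambda_{p-1}^2 = \lambda_p$). The only points demanding care are (a) remembering that Proposition~\ref{regles}(5) is a genuine statement only for $p \ge 4$ --- for $p = 3$ the atom $\lambda_1\lambda_{p-1} = \lambda_1\lambda_2$ is automatically uniquely represented, which is precisely why the conclusion is $p = 3$ rather than an outright contradiction --- and (b) keeping track of the normalization $\lambda_1 = 1$, so that $\lambda_1\lambda_p = \lambda_p$ and the relevant products collapse correctly.
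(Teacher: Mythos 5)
Your proof is correct, and it rests on the same key observation as the paper's: under $\lambda_2^2=\lambda_1\lambda_p$, monotonicity forces the atom $\lambda_1\lambda_{p-1}$ to be uniquely represented, since any product $\lambda_k\lambda_l$ with $k,l\ge 2$ is at least $\lambda_2^2=\lambda_1\lambda_p>\lambda_1\lambda_{p-1}$. Where you diverge is in how the argument is closed. The paper pairs $\lambda_1\lambda_{p-1}$ with $\lambda_{p-1}\lambda_p$ (which is always in $\mathcal{U}r_\mu$) and invokes Corollary~\ref{lambda1lambdap} to obtain the equality $\lambda_{p-1}^2=\lambda_1\lambda_p=\lambda_2^2$, whence $\lambda_{p-1}=\lambda_2$ and $p=3$; it then disposes of the case $\lambda_{p-1}^2=\lambda_1\lambda_p$ by symmetry. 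You instead assume $p\ge 4$ and contradict Proposition~\ref{regles}(5) directly, and you handle the second case explicitly by testing $\lambda_2\lambda_p$ rather than by appealing to symmetry. Both closings lean on prior results of the paper and are logically sound; in particular yours is not circular, since Proposition~\ref{regles}(5) is established before and independently of the corollary (even though its proof of the case $\lambda_2\lambda_p$ internally runs through the same chain of implications). Your version has two small merits: it spells out the monotonicity step that the paper asserts without justification when it writes $\{\lambda_1\lambda_{p-1},\lambda_{p-1}\lambda_p\}\subset\mathcal{U}r_\mu$, and it correctly flags that Proposition~\ref{regles}(5) is a statement valid only for $p\ge 4$ (for $p=3$ the atom $\lambda_1\lambda_{p-1}=\lambda_1\lambda_2$ is automatically uniquely represented), a caveat the paper passes over with the phrase ``the case $p=3$ is trivial.''
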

\begin{proof}
	If we assume that $\lambda_2^2=\lambda_1\lambda_p$, then $\{\lambda_1\lambda_{p-1}, \lambda_{p-1}\lambda_p\}\subset{\cal U}r_\mu$. By Corollary \ref{lambda1lambdap}, we get $\lambda_{p-1}^2=\lambda_1\lambda_p=\lambda_2^2$, and thus $p=3$.
\end{proof}
For $p\ge 4$, the above result is improved as follows.
\begin{cor}\label{lem2.5} Under the notations above, if $ (\lambda_2^2,\lambda_{p-1}^2)= (\lambda_1\lambda_{p-1}, \lambda_2\lambda_{p})$ then $p = 4$. 
\end{cor}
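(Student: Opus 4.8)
The plan is to translate the two hypotheses into explicit constraints on the atoms and then to exploit the uniqueness rules to trap $\lambda_2\lambda_3$ in a contradiction once $p\ge 5$. Since $\lambda_1=1$, the equation $\lambda_2^2=\lambda_1\lambda_{p-1}$ reads $\lambda_{p-1}=\lambda_2^2$, and substituting this into $\lambda_{p-1}^2=\lambda_2\lambda_p$ gives $\lambda_2^4=\lambda_2\lambda_p$, i.e. $\lambda_p=\lambda_2^3$. Writing $r:=\lambda_2>1$, the support of $\mu$ is therefore $\{1,r,\lambda_3,\ldots,\lambda_{p-2},r^2,r^3\}$ with $r<\lambda_3<\cdots<\lambda_{p-2}<r^2$. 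The single most important consequence, on which the whole argument rests, is that no atom lies strictly between $r^2$ and $r^3$: every $\lambda_k$ with $k\le p-2$ is $<\lambda_{p-1}=r^2$, while $\lambda_{p-1}=r^2$ and $\lambda_p=r^3$ are the top two atoms.

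With this in hand I would argue by contradiction, assuming $p\ge 5$ (the degenerate case $p=3$ is excluded at once, since there $\lambda_{p-1}=\lambda_2$ and the first hypothesis collapses to $\lambda_2^2=\lambda_2$, forcing $\lambda_2=1=\lambda_1$). For $p\ge 5$ the atom $\lambda_3$ exists and satisfies $r<\lambda_3\le\lambda_{p-2}<r^2$. First I would check that $\lambda_1\lambda_3=\lambda_3\in{\cal U}r_\mu$: any factorization $\lambda_i\lambda_j=\lambda_3$ with $i,j\ge 2$ would give $\lambda_i\lambda_j\ge\lambda_2^2=r^2>\lambda_3$, so one index must equal $1$, and then the other is forced to be $3$.

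Next I would invoke Corollary \ref{lambda1l} with $k=3$, which is legitimate because $2<3<p-1$ when $p\ge 5$: it guarantees that at least two of the three atoms $\lambda_1\lambda_3,\lambda_2\lambda_3,\lambda_3^2$ lie in ${\cal NU}r_\mu$. Since $\lambda_1\lambda_3\in{\cal U}r_\mu$, this forces in particular $\lambda_2\lambda_3\in{\cal NU}r_\mu$. I would then contradict this by showing $\lambda_2\lambda_3$ is in fact uniquely represented. Indeed $\lambda_2\lambda_3=r\lambda_3$ lies in $(r^2,r^3)$; examining a hypothetical second factorization $\lambda_i\lambda_j=r\lambda_3$ by its smallest index: if that index is $1$ we would need an atom at $r\lambda_3\in(r^2,r^3)$, which does not exist; if it is $2$ the relation reduces to $\lambda_j=\lambda_3$, i.e. $j=3$; and if it is at least $3$ then $\lambda_i\lambda_j\ge\lambda_3^2>r\lambda_3$. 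Hence $\{2,3\}$ is the only representation and $\lambda_2\lambda_3\in{\cal U}r_\mu$, the desired contradiction. Therefore $p\le 4$, and together with $p\ne 3$ this yields $p=4$.

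The crux of the argument, and the step I expect to require the most care, is the final uniqueness check for $\lambda_2\lambda_3$: it is exactly here that both hypotheses are needed, the first ($\lambda_{p-1}=r^2$) to push all intermediate atoms below $r^2$, and the second ($\lambda_p=r^3$) to place the largest atom precisely at $r^3$, so that the band $(r^2,r^3)$ is free of atoms. Everything else is bookkeeping with the rules already established in Proposition \ref{regles} and its corollaries.
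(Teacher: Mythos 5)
Your proof is correct, but it takes a genuinely different route from the paper's. The paper never computes the atoms explicitly: it splits into two cases. For $p\ge 6$ it observes that the hypotheses force the two chains $\lambda_1\lambda_2<\cdots<\lambda_1\lambda_{p-2}$ and $\lambda_3\lambda_p<\cdots<\lambda_p^2$ to consist of uniquely represented elements, then applies Proposition \ref{regles}(4) twice --- once to $\{\lambda_1^2,\lambda_1\lambda_3,\lambda_3\lambda_p,\lambda_p^2\}$ and once to $\{\lambda_1^2,\lambda_1\lambda_4,\lambda_4\lambda_p,\lambda_p^2\}$ --- to obtain $\lambda_3^2=\lambda_1\lambda_p=\lambda_4^2$, hence $\lambda_3=\lambda_4$, a contradiction; the case $p=5$ is then handled by a separate ad hoc argument. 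You instead solve the hypotheses explicitly (after normalizing $\lambda_1=1$: $\lambda_{p-1}=\lambda_2^2$ and $\lambda_p=\lambda_2^3$), note that the band $(\lambda_2^2,\lambda_2^3)$ is atom-free, and use pure order comparisons to show that $\lambda_1\lambda_3$ and $\lambda_2\lambda_3$ are \emph{both} uniquely represented, contradicting the pairwise rule. Two remarks on that last step: your appeal to Corollary \ref{lambda1l} works, but since you have already shown $\lambda_1\lambda_3\in{\cal U}r_\mu$ and only need $\lambda_2\lambda_3\in{\cal NU}r_\mu$, the weaker Corollary \ref{lambda1lambdap} suffices, and it rests on the most robust part of Proposition \ref{regles}; this is worth preferring, as the proof of Corollary \ref{lambda1l} is the more delicate of the two. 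What your approach buys: a single uniform argument covering every $p\ge 5$, no coefficient matching beyond what the quoted corollaries already encapsulate, and an explicit dismissal of the degenerate case $p=3$, which the paper sidesteps by assuming $p\ge 4$. What the paper's approach buys: its chains of uniquely represented products and the double use of Proposition \ref{regles}(4) serve as a template that is reused in the six-atom analysis (Lemmas \ref{lem2.6} and \ref{lem2.8}), so the case split there is less self-contained but more continuous with what follows.
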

\begin{proof}
	Let $p\ge 6$ and  assume  $\lambda_2^2= \lambda_1\lambda_{p-1}$ and  $\lambda_{p-1}^2= \lambda_2\lambda_{p}$.   From the inequalities 
	$$\lambda_1^2 < \lambda_1\lambda_2 < \lambda_1\lambda_3<\cdots \lambda_1\lambda_{p-1}=  \lambda_2^2 \, \mbox{  and } \, 
	\lambda_{p-1}^2 =\lambda_2\lambda_p < \lambda_3\lambda_p<\cdots <  \lambda_p^2,  $$
	it comes that $\lambda_1\lambda_2 < \lambda_1\lambda_3<\cdots \lambda_1\lambda_{p-2}$ 
	and $ \lambda_3\lambda_p < \lambda_4\lambda_p<\cdots <  \lambda_p^2 $
	are   uniquely represented. In particular, $\{\lambda_1^2 , \lambda_1\lambda_3 , \lambda_3\lambda_p ,\lambda_p^2 \} \subset {\cal U}r_\mu$ and  $\{\lambda_1^2 , \lambda_1\lambda_4, \lambda_4\lambda_p, \lambda_p^2  \} \subset {\cal U}r_\mu$. We use Proposition \ref{regles} again to get  $\lambda_3^2 =\lambda_1\lambda_p= \lambda_4^2$. Thus $\lambda_3=\lambda_4.$  Contradiction.\\
	For $p=5$, suppose $\lambda_2^2= \lambda_1\lambda_{4}$ and  $\lambda_{4}^2= \lambda_2\lambda_{5}$. We derive in particular that $\{\lambda_1\lambda_3 ,  \lambda_3\lambda_5\} \subset  {\cal NU}r_\mu $ and then $\lambda_3^2=\lambda_1\lambda_5. $ This last fact forces $\{\lambda_1\lambda_3 ,  \lambda_2\lambda_3\} \subset  {\cal U}r_\mu $ and contradicts Proposition \ref{regles}.
\end{proof}
\section{Finite atomic measures associated with weighted shifts having  subnormal Aluthge transform.}
In the sequel, we will say that  $p$ atomic measure $\mu$ has a geometric support if it is of the next form
$$ supp(\mu ) = \{a,ar,\cdots,ar^{p-1}\} \: \: \: \mbox{ for some } a, r  \in {\mathbb R}^*.$$
We start with the next useful lemma
\begin{lem}\label{geom} Let $\mu$ be $p-$atomic such that  $ {\tilde W}_\mu$ is subnormal. Then  $\mu$ has a geometric support if and only if  $card(supp(\mu*t\mu)) =2p-1$.
\end{lem}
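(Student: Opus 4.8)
The plan is to notice that $card(supp(\mu*t\mu))$ is a purely positional quantity: it counts the distinct values taken by the products $\lambda_i\lambda_j$ with $i\le j$, and does not involve the masses. So I would first isolate the \emph{staircase chain}
$$\lambda_1^2 < \lambda_1\lambda_2 < \lambda_2^2 < \lambda_2\lambda_3 < \lambda_3^2 < \cdots < \lambda_{p-1}^2 < \lambda_{p-1}\lambda_p < \lambda_p^2,$$
whose $2p-1$ entries are pairwise distinct, since $\lambda_i^2 < \lambda_i\lambda_{i+1} < \lambda_{i+1}^2$ follows at once from $\lambda_i < \lambda_{i+1}$. Each entry is a product $\lambda_i\lambda_j$, so this re-proves the lower bound $card(supp(\mu*t\mu)) \ge 2p-1$ from Proposition \ref{card}, and makes clear that equality holds exactly when \emph{every} product $\lambda_i\lambda_j$ already appears in this chain.

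For the direct implication I would assume the support is geometric; after the normalization $\lambda_1 = 1$ this means $\lambda_k = r^{k-1}$ with $r = \lambda_2 > 1$. Then $\lambda_i\lambda_j = r^{i+j-2}$ depends only on the sum $i+j$, and as $(i,j)$ ranges over $1\le i\le j\le p$ the sum $i+j$ runs through exactly $\{2,3,\dots,2p\}$. Since $r>1$, distinct exponents give distinct masses, so there are precisely $2p-1$ atoms and $card(supp(\mu*t\mu)) = 2p-1$.

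For the converse I would assume $card(supp(\mu*t\mu)) = 2p-1$; then the staircase exhausts the support, so every product is one of its entries. Fixing $k$ with $2\le k\le p-1$ and examining the interior product $\lambda_{k-1}\lambda_{k+1}$, the relations $\lambda_{k-1}<\lambda_k<\lambda_{k+1}$ give
$$\lambda_{k-1}\lambda_k < \lambda_{k-1}\lambda_{k+1} < \lambda_k\lambda_{k+1},$$
and the only staircase entry lying strictly between the neighbours $\lambda_{k-1}\lambda_k$ and $\lambda_k\lambda_{k+1}$ is $\lambda_k^2$. Hence $\lambda_{k-1}\lambda_{k+1} = \lambda_k^2$ for each such $k$, i.e. $\lambda_{k+1}/\lambda_k = \lambda_k/\lambda_{k-1}$; the common ratio is $r = \lambda_2$ and $\lambda_k = r^{k-1}$, so the support is geometric.

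The hard part will be the squeezing step in the converse: everything hinges on knowing that $\lambda_k^2$ is the \emph{unique} staircase value in the open interval $(\lambda_{k-1}\lambda_k,\ \lambda_k\lambda_{k+1})$, which is what pins the interior product down to a single value rather than merely bounding it. Fortunately this is immediate from the strict ordering of the staircase, so no case analysis is required. I would also remark that the argument uses only the positions $\lambda_i$; the standing subnormality hypothesis on $\tilde W_\mu$ is the ambient setting of the section but is not actually invoked in this equivalence.
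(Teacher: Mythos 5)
Your proof is correct and takes essentially the same route as the paper: the forward direction counts the distinct exponents $r^{i+j-2}$, and the converse squeezes $\lambda_{k-1}\lambda_{k+1}$ strictly between $\lambda_{k-1}\lambda_k$ and $\lambda_k\lambda_{k+1}$ to force $\lambda_{k-1}\lambda_{k+1}=\lambda_k^2$, which is precisely the paper's argument (there indexed as $\lambda_i\lambda_{i+2}\in\,]\lambda_i\lambda_{i+1},\lambda_{i+1}\lambda_{i+2}[\,\cap\, supp(\mu*t\mu)=\{\lambda_{i+1}^2\}$). Your closing remark that the subnormality of $\tilde W_\mu$ is never actually invoked is also accurate and consistent with the paper's proof, which likewise uses only the positions of the atoms.
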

\textit{Proof.} For the direct implication, suppose $supp(\mu ) = \{a,ar,\cdots,ar^{p-1}\}$. Direct computations, give
$supp(\mu*t\mu)= \{a^2, a^2r^{}, \cdots,a^2r^{2p-2}\} $ and hence  $card(supp(\mu*t\mu)) =2p-1$.

Conversely, write $supp(\mu)= \{\lambda_1,\cdots, \lambda_p\}$ and suppose that  $card(supp(\mu*t\mu)) =2p-1$. It follow that $$supp(\mu*t\mu)=\{\lambda_1^2<\lambda_1\lambda_2<\lambda_2^2<\cdots< \lambda_{p-1}^2<\lambda_{p-1}\lambda_p<\lambda_p^2\}.$$
Since   
$
\lambda_i\lambda_{i+2}\in ]\lambda_i\lambda_{i+1},\lambda_{i+1}\lambda_{i+2}[\cap supp(\mu*t\mu)=\{\lambda_{i+1}^2\}, 
$ for every $1\le i \le p-2$, 
we derive that   $\lambda_i\lambda_{i+2}=\lambda_{i+1}^2$, and then $\frac{\lambda_{i+1}}{\lambda_i}=\frac{\lambda_{i+2}}{\lambda_{i+1}}$. Thus $r=:\frac{\lambda_2}{\lambda_1}= \frac{\lambda_3}{\lambda_2}=\cdots=\frac{\lambda_{p}}{\lambda_{p-1}}$. Finally $\mu$ has a geometric support with $a=\lambda_1$ and 
$r=:\frac{\lambda_2}{\lambda_1}$.
\subsection{The case   $card(supp(\mu))= 3$.}
\begin{prop}\label{3atoms} Under the notations above, if $p=3$ then  the following are equivalent
	\begin{enumerate}
		\item $ {\tilde W}_\mu$ is subnormal;
		\item $\mu = a_1\delta_{\lambda_1}+a_2\delta_{\lambda_1r} +a_3\delta_{\lambda_1r^2}$  with $ r=\frac{\lambda_2}{\lambda_1}
		>1$ and where  $a_1, a_2, a_3$ are non negative numbers 
		satisfying  $a_1+a_2+a_3=1$ and $a_2^2=4a_1a_3$.
		\item $\mu $ admits a square root.
	\end{enumerate}
\end{prop}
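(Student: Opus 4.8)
The plan is to prove the cycle $(2)\Rightarrow(3)\Rightarrow(1)\Rightarrow(2)$, which yields all three equivalences while isolating the one genuinely new computation. Throughout I normalize $\lambda_1=1$ (legitimate by the scaling remark of Section~2), so that $\mu=a_1\delta_1+a_2\delta_{\lambda_2}+a_3\delta_{\lambda_3}$ with $1<\lambda_2<\lambda_3$ and $a_1+a_2+a_3=1$, the last identity being forced by $\gamma_0=\mu(\mathbb R)=1$.

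The implication $(3)\Rightarrow(1)$ is immediate from Proposition~\ref{lemmeconvolution} and the remark following it: a square root of $\mu$ furnishes subnormality of $\tilde W_\mu$. For $(2)\Rightarrow(3)$ I would exhibit the root directly. With geometric support $\{1,r,r^2\}$ one has $\nu:=\sqrt{a_1}\,\delta_1+\sqrt{a_3}\,\delta_r$ and $\nu*\nu=a_1\delta_1+2\sqrt{a_1a_3}\,\delta_r+a_3\delta_{r^2}$; the hypothesis $a_2^2=4a_1a_3$ is exactly the statement that $2\sqrt{a_1a_3}=a_2$, so $\nu*\nu=\mu$.

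The substance lies in $(1)\Rightarrow(2)$. By Proposition~\ref{lemmeconvolution}, subnormality of $\tilde W_\mu$ provides a nonnegative measure $\nu=b_1\delta_1+b_2\delta_{\lambda_2}+b_3\delta_{\lambda_3}$ with $\nu*\nu=\mu*t\mu$. First I would force geometric support: Proposition~\ref{regles}(5) gives $\lambda_2^2\in\mathcal{NU}r_\mu$, and since $\lambda_1\lambda_2<\lambda_2^2<\lambda_2\lambda_3$, the only atom of $\mu*t\mu$ besides $\lambda_2\cdot\lambda_2$ that can take the value $\lambda_2^2$ is $\lambda_1\lambda_3$; hence $\lambda_2^2=\lambda_1\lambda_3$, i.e. $\lambda_2/\lambda_1=\lambda_3/\lambda_2=:r>1$ and $\mathrm{supp}(\mu)=\{1,r,r^2\}$ (equivalently $\mathrm{card}\,\mathrm{supp}(\mu*t\mu)=5=2p-1$ and Lemma~\ref{geom} applies). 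Then I would match masses in \eqref{nu*nu} over the five atoms $\{1,r,r^2,r^3,r^4\}$: the four uniquely represented ones give $b_1=a_1$, $b_3=a_3r$, and $b_2=a_2(1+r)/2$ consistently from both the $\delta_r$ and the $\delta_{r^3}$ equations, while the collision atom $r^2=\lambda_2^2=\lambda_1\lambda_3$ supplies the extra relation $b_2^2+2b_1b_3=a_2^2r+a_1a_3(1+r^2)$.

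Substituting these values and using $\tfrac{(1+r)^2}{4}-r=\tfrac{(1-r)^2}{4}$ and $1+r^2-2r=(1-r)^2$, the collision relation collapses to $\tfrac14 a_2^2(1-r)^2=a_1a_3(1-r)^2$; dividing by $(1-r)^2\neq0$ (as $r>1$) yields $a_2^2=4a_1a_3$, completing $(1)\Rightarrow(2)$. I expect the main obstacle to be exactly this collision equation: everything hinges on the single overdetermined identity at $\lambda_2^2=\lambda_1\lambda_3$ reducing cleanly to $a_2^2=4a_1a_3$, the remaining four matchings being routine and mutually consistent.
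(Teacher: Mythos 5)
Your proposal is correct and takes essentially the same route as the paper: both arguments force $\lambda_2^2=\lambda_1\lambda_3$ from non-unique representation of $\lambda_2^2$ (hence geometric support), match coefficients of $\nu*\nu=\mu*t\mu$ at the four uniquely represented atoms, extract $a_2^2=4a_1a_3$ from the collision equation at $\lambda_2^2=\lambda_1\lambda_3$, and exhibit the two-atom root $\sqrt{a_1}\,\delta_1+\sqrt{a_3}\,\delta_r$ for $(2)\Rightarrow(3)$. The only differences are cosmetic: you normalize $\lambda_1=1$ (the paper carries $\sqrt{\lambda_1}$ through) and you order the cycle $(2)\Rightarrow(3)\Rightarrow(1)\Rightarrow(2)$ instead of $(1)\Rightarrow(2)\Rightarrow(3)\Rightarrow(1)$.
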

\begin{proof} $(1.\Rightarrow 2.)$ Suppose that $ {\tilde W}_\mu$ is subnormal.  
	Since $supp(\mu*t\mu)=\{\lambda_1^2,\lambda_1\lambda_2, \lambda_{2}^2, \lambda_{1}\lambda_3, \lambda_{2}\lambda_3,\lambda_3^2\}$ and ${\cal NU}r_\mu = \{\lambda_2^2, \lambda_{1}\lambda_3\}$ it follows that $\lambda_{2}^2= \lambda_{1}\lambda_3$ and $card(supp(\mu))=5=2\times3-1,$ and  hence  $\mu$ has a geometric support and then  $\mu = a_1\delta_{\lambda_1}+a_2\delta_{\lambda_1r} +a_3\delta_{\lambda_1r^2}$.
	
	Now writing   $\mu*t\mu=\nu*\nu$ for some $\nu=b_1\delta_{\lambda_1}+b_2\delta_{\lambda_1r} +b_3\delta_{\lambda_1r^2}$, we get
	$$\begin{array}{ll}
		\mu*t\mu= & a_1^2\lambda_1\delta_{\lambda_1^2}+a_1a_2\lambda_1(1+r)\delta_{\lambda_1^2 r}+\lambda_1(a_1a_3(1+r^2)+a_2^2r)\delta_{\lambda_1^2 r^2} \\ & +a_2a_3\lambda_1(r+r^2)\delta_{\lambda_1^2 r^3}+   a_3^2\lambda_1 r^2\delta_{\lambda_1^2 r^4}\\ \nu *\nu = & b_1^2\delta_{\lambda_1^2}+2b_1b_2\delta_{\lambda_1^2 r}+(2b_1b_3+b_2^2)\delta_{\lambda_1^2r^2} +2b_2b_3\delta_{\lambda_1^2 r^3}+   b_3^2\delta_{\lambda_1^2 r^4}.\end{array}$$\\
	Identifying the coefficient of the  atoms, we obtain $b_1= a_1\sqrt{\lambda_1}$, $b_2 = a_2\sqrt{\lambda_1}\frac{1+r}{2}$ and  
	$b_3= a_3r\sqrt{\lambda_1}$. Using the identity $2b_1b_3+b_2^2=\lambda_1(a_1a_3(1+r^2)+a_2^2r) $ we get  $ a_2^2=4a_1a_3$.\\
	$(2.\Rightarrow 3.)$ Let  $\mu = a_1\delta_{\lambda_1}+a_2\delta_{\lambda_1r} +a_3\delta_{\lambda_1 r^2}$, where   $a_1, a_2, a_3$ are non negative numbers 
	satisfying  $a_1+a_2+a_3=1$ and $a_2^2=4a_1a_3$. Then  $\mu_0=\sqrt{a_1}\delta_{\sqrt{\lambda_1}}+\sqrt{a_3}\delta_{r\sqrt{\lambda_1}}$ satisfies $\mu = \mu_0*\mu_0$.\\ 
	$(3.\Rightarrow 1.)$ Derives from $\mu = \mu_0*\mu_0\Rightarrow \mu*t\mu= \mu_0*t\mu_0* \mu_0*t\mu_0$ and Proposition \ref{lemmeconvolution}.
\end{proof}
\subsection{The case   $card(supp(\mu))= 4$.}
\begin{prop} Under the notations above, if $p=4$ then $ {\tilde W}_\mu$ is not subnormal.\end{prop}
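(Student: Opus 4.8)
The plan is to combine the counting bounds already established with a short but decisive elimination of the mass parameters. The first move is a reduction: by Proposition \ref{card} applied with $p=4$ we have $7=2p-1\le card(supp(\mu*t\mu))\le \left[\frac{(p-1)^2+6}{2}\right]=7$, so subnormality of $\tilde W_\mu$ forces $card(supp(\mu*t\mu))=2p-1$. Lemma \ref{geom} then says $\mu$ has a geometric support, and after the normalization $\lambda_1=1$ we may write $\lambda_i=r^{i-1}$ with $r>1$. Thus it suffices to show that no nonnegative $\nu=b_1\delta_1+b_2\delta_r+b_3\delta_{r^2}+b_4\delta_{r^3}$ can satisfy $\nu*\nu=\mu*t\mu$.

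Next I would match the coefficients of $\nu*\nu$ and $\mu*t\mu$ given by (\ref{nu*nu}) at the seven atoms $1,r,\dots,r^6$. The four extreme atoms $\lambda_1^2=1$, $\lambda_1\lambda_2=r$, $\lambda_3\lambda_4=r^5$, $\lambda_4^2=r^6$ are uniquely represented, so their equations determine the $b_i$ explicitly: one finds $b_1=a_1$, $b_2=\tfrac12 a_2(1+r)$, $b_3=\tfrac12 a_3\sqrt r(1+r)$, and $b_4=a_4 r^{3/2}$. Substituting these into the three remaining equations, at the non-uniquely-represented atoms $r^2,r^3,r^4$, leaves an overdetermined system in the $a_i$ and $r$.

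The key simplification is the substitution $s=\sqrt r>1$: on each of the three equations both sides factor through $(s-1)^2$ (the middle atom) or through $(1-s^3)^2=(s-1)^2(1+s+s^2)^2$ (the central atom), and after cancelling the nonzero factor $(s-1)^2$ the system collapses to
$$a_2^2(1+s)^2 = 4a_1a_3(1+s+s^2), \quad a_3^2(1+s)^2 = 4a_2a_4(1+s+s^2), \quad a_2a_3\,s(1+s^2) = 2a_1a_4(1+s+s^2)^2.$$

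The decisive step is to eliminate the masses. Multiplying the first two relations and dividing by $a_2a_3$ gives $a_2a_3(1+s)^4=16\,a_1a_4(1+s+s^2)^2$; using the third relation to replace $a_1a_4(1+s+s^2)^2$ by $\tfrac12 a_2a_3\,s(1+s^2)$ and dividing once more by $a_2a_3$ yields the single scalar identity $(1+s)^4=8s(1+s^2)$. Expanding, this is precisely $(s-1)^4=0$, so $s=1$ and hence $r=1$, contradicting $r>1$; therefore $\tilde W_\mu$ is not subnormal. The main obstacle is purely organizational: one must arrange the factorizations carefully so that the $a_i$ cancel cleanly, but once $s=\sqrt r$ is introduced the computation is routine and the final collapse to $(s-1)^4=0$ is the crux.
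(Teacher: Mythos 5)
Your proof is correct, and while it starts exactly as the paper does (the cardinality squeeze $7\le card(supp(\mu*t\mu))\le 7$ via Proposition \ref{card}, then geometric support via Lemma \ref{geom}, then determining $b_1,\dots,b_4$ from the four uniquely represented atoms $1,r,r^5,r^6$), the way you reach the contradiction is genuinely different. The paper uses only \emph{one} of the remaining compatibility equations (the one at the atom $r^2$) and then invokes the remark from the beginning of Section 2 that existence of a square root is independent of the ratio $r$: the equation is therefore a polynomial identity in $\sqrt{r}$, all coefficients must vanish, and the coefficient of $r^{3/2}$ is $a_0a_2>0$ --- contradiction. You instead keep $r$ fixed, use \emph{all three} equations at $r^2,r^3,r^4$, cancel the factor $(1-s)^2$ (with $s=\sqrt r$) from each, and eliminate the masses to arrive at $(1+s)^4=8s(1+s^2)$, i.e.\ $(s-1)^4=0$, forcing $r=1$. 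I checked your three reduced relations $a_2^2(1+s)^2=4a_1a_3(1+s+s^2)$, $a_3^2(1+s)^2=4a_2a_4(1+s+s^2)$, $a_2a_3\,s(1+s^2)=2a_1a_4(1+s+s^2)^2$ against the coefficient equations; they and the final elimination are right. The trade-off: the paper's route is shorter but leans on the $r$-independence remark (which, as applied to square roots of $\mu*t\mu$ rather than of $\mu$ itself, is the subtlest point of their argument); your route costs two more equations and some factoring, but is entirely self-contained for a fixed measure --- it shows directly that the system has no solution with $r>1$, with no need to vary $r$ or treat $\sqrt r$ as an indeterminate.
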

\begin{proof} Assume $ {\tilde W}_\mu$ is  subnormal. It will follow that $$7\le card(supp(\mu*t\mu) )\le \left[\frac{(4-1)^2+6}{2}\right] = 7,$$ and hence
	$supp(\mu)$ is geometric. Assume $\lambda_1=1$. We put $\lambda_i= r^{i-1}$ for $1\le i\le 4$ and write
	$\mu=\sum\limits_{i=0}^{i=3}a_i\delta_{r^i}$, we get\\
	$\mu*t\mu=
	a_0^2\delta_{1}+a_0a_1(1+r)\delta_{r}+(a_1^2r+a_0a_2(1+r^2))\delta_{r^2}+(a_1a_2(r+r^2)+a_0a_3(1+r^3))\delta_{r^3}+(a_2^2r^2+a_1a_3(r+r^3))\delta_{r^4} +a_2a_3(r^2+r^3)\delta_{r^5}+a_3^2r^3\delta_{r^6}.
	$ 
	
	Also, we set $\nu=\sum\limits_{i=0}^{i=3}b_i\delta_{r^i}$.
	for $\nu$ such that $\mu*t\mu=\nu*\nu$. Matching the coefficients of ${\cal U}r_\mu$ elements, we derive the identities 
	$$ b_0^2 = a_0^2, \; 
	2b_0b_1 = a_0a_1(1+r), \; 
	2b_2b_3= a_2a_3(r^2+r^3) \; 
	\mbox{ and } \;  b_3^2=r^3a_3^2.
	$$
	From which, we deduce $
	b_0 = a_0 , \;  b_1= a_1\frac{(1+r)}{2}, \;
	b_2= a_2\frac{(1+r)\sqrt{r}}{2}, \; \mbox{ and } 
	b_3=a_3r\sqrt{r}.$\\
	Plugging in the additional equation 
	$$
	2b_0b_2+b_1^2 =  a_0a_2(1+r^2)+a_1^2r, 
	$$
	we obtain
	$$0=a_0a_2(1+r)\sqrt{r}+a_1^2\frac{(1+r)^2}{4} 
	-  a_0a_2(1+r^2)-a_1^2r.$$
	The previous equality holds for arbitrary $r$, as noticed  in the beginning of  the second section,  and hence all  coefficients in the polynomial equation (in $\sqrt{r}$) vanish. In particular, with $r\sqrt{r}$,  we deduce that $a_0a_2=0$. Contradiction
\end{proof}
Since every measure with a square root satisfies $ {\tilde W}_\mu$ is  subnormal,  we deduce the next corollary, 
\begin{cor}
	Under the notations above, if $p=4$ then $\mu$ has no square root.
\end{cor}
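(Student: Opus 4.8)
The plan is to deduce the corollary as the contrapositive of the implication already recorded immediately after Proposition \ref{lemmeconvolution}, namely that a square root of $\mu$ forces $\tilde W_\mu$ to be subnormal, combined with the Proposition immediately preceding this corollary, which asserts that for $p=4$ the Aluthge transform $\tilde W_\mu$ is \emph{not} subnormal. In other words, the content of the corollary is purely formal once those two facts are in hand, so the work is to lay out the logical chain cleanly rather than to compute anything new.

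First I would make the implication ``$\mu$ has a square root $\Rightarrow \tilde W_\mu$ is subnormal'' explicit. If $\mu=\mu_0*\mu_0$ for some nonnegative measure $\mu_0$, then, since multiplicative convolution is commutative and associative, $\mu*t\mu=(\mu_0*t\mu_0)*(\mu_0*t\mu_0)$; thus $\mu*t\mu$ admits the nonnegative square root $\mu_0*t\mu_0$. Because $\mu$ is supported in $(0,\infty)$, so is $\mu_0$, whence $\mu_0*t\mu_0$ is nonnegative and $\mathbb R^+$-supported, and after normalization it is exactly the probability measure $\nu$ required by Proposition \ref{lemmeconvolution}; that proposition then yields subnormality of $\tilde W_\mu$. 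This is the same reduction used in the proof of Proposition \ref{3atoms} for the implication $(3\Rightarrow 1)$.

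Then I would argue by contradiction. Suppose $p=4$ and $\mu$ has a square root. By the implication just recalled, $\tilde W_\mu$ is subnormal, contradicting the preceding Proposition. Hence $\mu$ has no square root, which is the assertion.

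I do not expect any genuine obstacle at the level of the corollary: the entire difficulty is concentrated in the preceding Proposition, whose proof matches the coefficients of the uniquely represented atoms of $\mu*t\mu$, forces the geometric form of $supp(\mu)$ via $7\le card(supp(\mu*t\mu))\le 7$, and extracts the contradiction $a_0a_2=0$, and which we are free to assume here. The only point deserving a line of care is the normalization turning $\mu_0*t\mu_0$ into a probability measure so that Proposition \ref{lemmeconvolution} applies verbatim; since rescaling by a positive constant affects neither the existence of a square root nor the subnormality of $\tilde W_\mu$, this is routine.
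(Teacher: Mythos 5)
Your proposal is correct and follows exactly the paper's route: the paper's proof is precisely the one-line observation "Since every measure with a square root satisfies ${\tilde W}_\mu$ is subnormal, we deduce the next corollary," i.e.\ the contrapositive of the implication recorded after Proposition \ref{lemmeconvolution} combined with the preceding Proposition that ${\tilde W}_\mu$ is not subnormal when $p=4$. Your added verification that $\mu=\mu_0*\mu_0$ yields $\mu*t\mu=(\mu_0*t\mu_0)*(\mu_0*t\mu_0)$ is the same reduction the paper uses in the step $(3\Rightarrow 1)$ of Proposition \ref{3atoms}, so nothing is missing.
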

Recal that  we are assuming here that $0\not\in supp(\mu)$. The case where $0$ is in the support the result derives from   3 atomic  support measure satisfying proposition \ref{3atoms}.
\subsection{The case   $card(supp(\mu))= 5$.}
The main result in this subsection  stated next is form \cite{Curto2019}. We give a simpler proof based on uniquely represented elements.
\begin{prop}\label{case5} Let $\mu=\sum_{i=1}^5a_i\delta_{\lambda_i}$ a probability measure with $0<\lambda_1<\cdots <\lambda_5$. The following are equivalent
	\begin{enumerate}
		\item $\mu*t\mu$ has a square  root
		\item $\mu$ has geometric support and $\frac{a_2^2}{a_1}  = \frac{a_4^2}{a_5} $ and  $a_3= \frac{a_2^2}{4a_1} +2\sqrt{a_1 a_5}.$
		\item $\mu$ has a square root.
	\end{enumerate}  
\end{prop}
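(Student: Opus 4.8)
The plan is to establish the cycle $(3)\Rightarrow(1)\Rightarrow(2)\Rightarrow(3)$. The implication $(3)\Rightarrow(1)$ is immediate: if $\mu=\mu_0*\mu_0$ then $W_\mu$ is subnormal and, as recorded after Proposition~\ref{lemmeconvolution}, $\tilde W_\mu$ is subnormal, so Proposition~\ref{lemmeconvolution} provides a nonnegative $\nu$ with $\nu*\nu=\mu*t\mu$. For $(2)\Rightarrow(3)$, after normalizing $\lambda_1=1$ so that the support is $\{1,r,r^2,r^3,r^4\}$, I would write down the three-atom candidate
$$\mu_0=\sqrt{a_1}\,\delta_{1}+\frac{a_2}{2\sqrt{a_1}}\,\delta_{r}+\sqrt{a_5}\,\delta_{r^2}$$
and expand $\mu_0*\mu_0$: the masses at $1,r,r^4$ match automatically, the mass at $r^3$ matches $a_4$ precisely because $\frac{a_2^2}{a_1}=\frac{a_4^2}{a_5}$, and the mass at $r^2$ matches $a_3$ precisely because $a_3=\frac{a_2^2}{4a_1}+2\sqrt{a_1a_5}$, exactly as in Proposition~\ref{3atoms}.

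The substance is $(1)\Rightarrow(2)$, which I split into (A) geometric support and (B) the two scalar identities. For (A) I would start from Proposition~\ref{card}, giving $9\le card(supp(\mu*t\mu))\le 11$, so it suffices to prove $card=9$, i.e.\ geometric support by Lemma~\ref{geom}. Since $\lambda_2^2,\lambda_4^2\in{\cal NU}r_\mu$ and the $\lambda_i$ increase, one has $\lambda_2^2=\lambda_1\lambda_j$ with $j\in\{3,4,5\}$ and $\lambda_4^2=\lambda_i\lambda_5$ with $i\in\{1,2,3\}$; Corollary~\ref{lambda2lambdap-1} discards $j=5$ and $i=1$, and Corollary~\ref{lem2.5} discards $(j,i)=(4,2)$, leaving three configurations for $(\lambda_2^2,\lambda_4^2)$, namely (a) $(\lambda_1\lambda_3,\lambda_3\lambda_5)$, (b) $(\lambda_1\lambda_3,\lambda_2\lambda_5)$, (c) $(\lambda_1\lambda_4,\lambda_3\lambda_5)$. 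In (a) the relations make $\lambda_1,\lambda_2,\lambda_3$ and $\lambda_3,\lambda_4,\lambda_5$ geometric with ratios $r,s$; if $r\neq s$ (and $s\neq\sqrt r$) then $\lambda_2\lambda_3$ and $\lambda_3^2$ are uniquely represented, so $\{\lambda_1\lambda_3,\lambda_2\lambda_3,\lambda_3^2\}$ contains only the one non-uniquely represented atom $\lambda_1\lambda_3=\lambda_2^2$, contradicting Corollary~\ref{lambda1l}; hence $r=s$ and the support is geometric.

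Configurations (b) and (c), together with the borderline $s=\sqrt r$ of (a), survive the combinatorial rules and must be eliminated analytically. Here the corollaries first pin the support down completely: in (b), $\lambda_1\lambda_4\in{\cal NU}r_\mu$ forces $\lambda_4=\lambda_2\lambda_3=r^3$, so the support is $\{1,r,r^2,r^3,r^5\}$, and symmetrically (c) gives $\{1,t^2,t^3,t^4,t^5\}$. Matching the uniquely represented extreme atoms of $\nu*\nu=\mu*t\mu$ determines $b_1,b_2,b_4,b_5$, and then the matching identity at one distinguished non-uniquely represented atom collapses, after the substitution $w=\sqrt r$, to an equation such as
$$a_2^2\,\frac{(1-w^2)^2}{4}+a_1a_3\,\frac{(1-w)^2(1+w+w^2+w^3+w^4)}{w}=0$$
(in (b), from the atom $\lambda_2^2=\lambda_1\lambda_3$; the mirror equation, with a factor $-(w-1)^2(\cdots)$ of the opposite sign, arises in (c) from $\lambda_4^2=\lambda_3\lambda_5$). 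As every coefficient has a fixed sign while the masses are positive, no solution exists, so (b), (c) are impossible and the support is geometric. For (B) I would then set $\lambda_i=r^{i-1}$ and match atom by atom: the four extreme atoms give $b_1=a_1$, $b_2=\frac{a_2(1+r)}{2}$, $b_4=\frac{a_4 r(1+r)}{2}$, $b_5=a_5r^2$; computing $b_3$ from $\lambda_2^2$ and from $\lambda_4^2$ and equating yields $\frac{a_2^2}{a_1}=\frac{a_4^2}{a_5}$; computing $b_3$ once more from $\lambda_2\lambda_3=\lambda_1\lambda_4$ and cancelling the common factor $(1-r)^2$ yields $a_3=\frac{a_2^2}{4a_1}+2\sqrt{a_1a_5}$, which are the identities of $(2)$.

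The step I expect to be hardest is (A). The calculus of Proposition~\ref{regles} and its corollaries is, by itself, consistent with the non-geometric configuration (c) (all of rules 1--5 can be checked to hold for the support $\{1,t^2,t^3,t^4,t^5\}$), so no amount of purely combinatorial bookkeeping can finish the proof; the elimination of (b), (c) and of the borderline shapes inside (a) genuinely needs the coefficient-matching equations, and the real point is to recognize the sign-definite factorizations, e.g.\ $1+r^3-\sqrt r-r^{5/2}=(1-\sqrt r)^2(1+\sqrt r+r+r^{3/2}+r^2)$, that render those equations unsolvable in positive masses. Keeping track of the finitely many borderline support shapes is the only other delicate bookkeeping.
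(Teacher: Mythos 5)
Your overall route is essentially the paper's: the same cycle of implications, the same reduction via Corollaries \ref{lambda2lambdap-1} and \ref{lem2.5} to the three configurations (a), (b), (c), analytic elimination of (b) and (c) by matching coefficients at uniquely represented atoms, and the same explicit three-atom square root for $(2)\Rightarrow(3)$. The parts I could verify are correct: your displayed equation for (b) is exactly what matching at $\lambda_2^2=\lambda_1\lambda_3$ gives (with $b_3$ read off from the unique atom $\lambda_3\lambda_5$), and your sign-definite factorization is a cleaner way to reach the contradiction than the paper's ``the identity holds for all $r$, so each coefficient vanishes'' argument; your derivation of the two scalar identities in step (B) also checks out, using the atom $\lambda_2\lambda_3=\lambda_1\lambda_4$ where the paper uses $\lambda_3^2=\lambda_1\lambda_5$.

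There is, however, a genuine gap in step (A), configuration (a). Your dichotomy ``if $r\neq s$ (and $s\neq\sqrt{r}$) then $\lambda_2\lambda_3$ and $\lambda_3^2$ are uniquely represented'' is false: when $s=r^{2}$ one has $\lambda_3^2=\lambda_1\lambda_4$, so $\{\lambda_1\lambda_3,\lambda_2\lambda_3,\lambda_3^2\}$ again contains two elements of ${\cal NU}r_\mu$ and Corollary \ref{lambda1l} yields no contradiction. Since $s=r^{2}$ is also absent from your list of borderline shapes to be eliminated analytically (you list only $s=\sqrt{r}$), the non-geometric support $\{1,r,r^{2},r^{4},r^{6}\}$ escapes your argument entirely, and ``hence $r=s$'' does not follow. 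The repair uses tools you already have: Proposition \ref{regles}(5) disposes of both borderline shapes at once, because $\lambda_1\lambda_4\in{\cal NU}r_\mu$ forces $s\in\{r,r^{2}\}$ while $\lambda_2\lambda_5\in{\cal NU}r_\mu$ forces $s\in\{r,\sqrt{r}\}$, whence $s=r$ with no analytic work at all --- this is essentially how the paper finishes (a), by noting $\lambda_1\lambda_5$ must equal $\lambda_3^2$ or $\lambda_2\lambda_4$ lest $\lambda_1\lambda_4$ or $\lambda_2\lambda_5$ be uniquely represented. (In particular your remark that the borderline of (a) ``survives the combinatorial rules'' is wrong, though harmlessly so: for $s=\sqrt{r}$ the atom $\lambda_1\lambda_4$ is uniquely represented, violating rule (5).) A smaller omission of the same kind occurs in (b): $\lambda_1\lambda_4\in{\cal NU}r_\mu$ permits $\lambda_1\lambda_4=\lambda_3^2$ as well as $\lambda_1\lambda_4=\lambda_2\lambda_3$, and the resulting support $\{1,r,r^{2},r^{4},r^{7}\}$ must be discarded (again by rule (5), since there $\lambda_1\lambda_5$ is uniquely represented) before you may assert the support is $\{1,r,r^{2},r^{3},r^{5}\}$.
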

\textit{Proof.} Since $\mu*t\mu$ has a square  root, we get $$9\le card(supp(\mu*t\mu) )\le \left[\frac{(5-1)^2+6}{2}\right] = 11.$$
We first prove that $card(supp(\mu*t\mu) )\ne 11$. Counting non uniquely represented elements, it suffices to show that $\lambda_2\lambda_4 \in {\mathcal NU}r_\mu $. Indeed,  if $\lambda_2^2  = \lambda_1\lambda_4 $ we get $  \lambda_2\lambda_3=\lambda_1\lambda_5 $ and then $\lambda_3\lambda_4= \lambda_2\lambda_5. $ Which forces $\lambda_3^2  = \lambda_2\lambda_4.$  If $\lambda_2^2  = \lambda_1\lambda_3 $, we distinguish two subcases,
\begin{itemize}
	\item $     \lambda_1\lambda_4= \lambda_2\lambda_3\Rightarrow \lambda_3^2=\lambda_2\lambda_4.$ 
	\item $   \lambda_1\lambda_4= \lambda_3^2.$ Then since, $ \lambda_4^2=\lambda_3\lambda_5$, we get $ \lambda_1\lambda_5= \lambda_3\lambda_4$ and hence $\lambda_2\lambda_5\in {\mathcal U}r_\mu $.  Contradiction.
\end{itemize}
We claim that $card(supp(\mu*t\mu) )=9$ and hence that   $\mu$ has a geometric support. To this aim let us first suppose that $card(supp(\mu*t\mu) )=10$ and show that $\lambda_2^2=\lambda_1\lambda_3 $ and   $\lambda_4^2  = \lambda_3\lambda_5$. 
If $\lambda_2^2\ne \lambda_1\lambda_3 $, we get $\lambda_1\lambda_3\in {\mathcal U}r_\mu$ and $\lambda_4^2  = \lambda_3\lambda_5$. Thus 
$$\begin{array}{lll} supp(\mu*t\mu)&=&\{\lambda_1^2<\lambda_1\lambda_2<\lambda_1\lambda_3 <\lambda_2^2<\lambda_2\lambda_3<\, \lambda_3^2\, <\lambda_3\lambda_4<\,  \lambda_4^2\, <\lambda_4\lambda_5< \lambda_5^2  \} \\ &=&\{\lambda_1^2<\lambda_1\lambda_2<\lambda_1\lambda_3 <\lambda_2^2< \lambda_2\lambda_3<\lambda_2\lambda_4<\lambda_2\lambda_5<\lambda_3\lambda_5<\lambda_4\lambda_5< \lambda_5^2 \}.
\end{array} $$
We deduce that $ \; \lambda_3^2=\lambda_2\lambda_4,$ 
from which we get $$\frac{\lambda_3}{\lambda_2} = \frac{\lambda_4}{\lambda_3} = \frac{\lambda_5}{\lambda_4} (=r), $$
and together with  the  additional equality $\lambda_2^2=\lambda_1\lambda_4$, we obtain $$ \lambda_2=\lambda_1r^2, \lambda_3=\lambda_1r^3, \lambda_4=\lambda_1r^4 \mbox{ and } \lambda_5=\lambda_1r^5.$$ 
We put now  $\mu=a_1\delta_{\lambda_1}+ \sum_{i=2}^5 a_i \delta_{\lambda_1 r^i}$, and $\nu=b_1\delta_{\lambda_1}+ \sum_{i=2}^5 b_i \delta_{\lambda_1 r^i}$. Matching the uniquely represented elements in $\mu*t\mu=\nu*\nu$ we get:
$  b_1^2 = a_1^2\lambda_1, \;  2b_1b_2 = a_1a_2\lambda_1(1+r^2), \; 
2b_1b_3= a_1a_3{\lambda_1}(1+r^3), \; 2b_4b_5=a_4a_5{\lambda_1}(r^4+r^5)$ and $ 
b_5^2=a_5^2{\lambda_1}r^5. $
Then
$$ b_1 = a_1\sqrt{\lambda_1}, \; b_2 = a_2\sqrt{\lambda_1}\frac{(1+r^2)}{2}, \;  b_3= a_3\sqrt{\lambda_1}\frac{(1+r^3)}{2}, \:
b_4=a_4\sqrt{\lambda_1}r^{\frac32}\frac{(1+r)}{2}, \; 
b_5=a_5\sqrt{\lambda_1}r^{\frac52}$$
Using the additional equation 
$$ b_3^2+2b_2b_4=\lambda_1a_3^2r^3+\lambda_1a_2a_4(r^2+r^4),$$ we get $$
a_3^2\frac{(1+r^3)^2}{4}+a_2a_4r^{\frac32}\frac{(1+r)(1+r^2)}{2}-a_3^2r^3+a_2a_4(r^2+r^4)=0$$
and then $a_3= 0$ as the coefficient in $r^6$. Contradiction.

So $\lambda_2^2=\lambda_1\lambda_3$, and similarly $\lambda_4^2=\lambda_3\lambda_5$. We have  $\lambda_1\lambda_5 \in {\cal NU}r_\mu$ and  $\lambda_1\lambda_5 \in \{\lambda_3^2, \lambda_2\lambda_4\}$, (otherwise, either $\lambda_1\lambda_4$, or $\lambda_2\lambda_5$ is uniquely represented, which is not true). Now in both cases, we get $\mu$ has geometric support.

It remains to characterise measures $\mu$ with   geometric support such that  $\mu*t\mu$ has a square  root. We have the following
\begin{lem}\label{lem36} Under the notations above, suppose $\lambda_1=1$ and let $\mu=\sum_{i=1}^5a_i \delta_{r^i}$. Then $\mu*t\mu$ has a square root if and only if  $\frac{a_2^2}{a_1}  = \frac{a_4^2}{a_5} $ and  $a_3= \frac{a_2^2}{4a_1} +2\sqrt{a_1 a_5}.$
\end{lem}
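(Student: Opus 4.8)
The final statement, Lemma~\ref{lem36}, concerns a geometric-support measure $\mu=\sum_{i=1}^5 a_i\delta_{r^i}$ (with $\lambda_1=1$), and asks when $\mu*t\mu$ has a square root $\nu$. Since the support is geometric, $\mu*t\mu$ lives on the $9$ points $\{1,r,r^2,\dots,r^8\}$, and by the general principle recorded earlier (namely $supp(\nu)=supp(\mu)$), any root $\nu$ must be supported on $\{r^i\}_{i=1}^5$, so write $\nu=\sum_{i=1}^5 b_i\delta_{r^i}$. The entire lemma then reduces to a finite algebraic matching: I would write out both $\mu*t\mu$ and $\nu*\nu$ as in \eqref{nu*nu}, collect coefficients at each of the $9$ atoms $r^0,\dots,r^8$, and compare. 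This gives a system of $9$ equations in the $5$ unknowns $b_1,\dots,b_5$, with the $a_i$ and $r$ as parameters; the lemma is exactly the solvability condition for this overdetermined system (with all $b_i\ge 0$).

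\textbf{Key steps.}
First I would use the uniquely represented atoms to \emph{solve} for the $b_i$. The four extreme atoms $r^0=\lambda_1^2$, $r=\lambda_1\lambda_2$, $r^7=\lambda_4\lambda_5$, $r^8=\lambda_5^2$ are uniquely represented, and matching their coefficients yields
$$ b_1=a_1,\qquad b_2=a_2\frac{1+r}{2},\qquad b_4=a_4 r^{3/2}\frac{1+r}{2},\qquad b_5=a_5 r^2 $$
(taking $\lambda_1=1$). The middle atom $r^4$ gets contributions only from the pairs producing $\lambda_3^2$ and $\lambda_i\lambda_j$ with $i+j-2=4$; the cleanest way to isolate $b_3$ is to match an atom to which $b_3$ contributes as a cross term against an already-known $b_i$, e.g. the atom $r^2=\lambda_1\lambda_3=\lambda_2^2$, which is \emph{not} uniquely represented and so already carries information. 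Next, I would impose the \emph{remaining} matching equations — the ones at the non-uniquely-represented atoms $r^2,r^3,r^4,r^5,r^6$ — as constraints. After substituting the solved values of the $b_i$, these become polynomial identities in $\sqrt r$ that must hold; collecting and simplifying (and using that a genuine positive root must exist, i.e. all $b_i$ are determined consistently) should collapse precisely to the two stated conditions $\frac{a_2^2}{a_1}=\frac{a_4^2}{a_5}$ and $a_3=\frac{a_2^2}{4a_1}+2\sqrt{a_1 a_5}$. For the converse, given these two conditions I would exhibit the root directly: the symmetry condition $a_2^2/a_1=a_4^2/a_5$ is the hallmark of a two-atom convolution square, so I expect $\mu$ itself (not merely $\mu*t\mu$) to factor, and I would guess a root of the form $\mu_0=\sqrt{a_1}\,\delta_{r^{1/2}}+c\,\delta_{r^{3/2}}+\sqrt{a_5}\,\delta_{r^{5/2}}$ and verify $\mu_0*\mu_0=\mu$, then invoke Proposition~\ref{lemmeconvolution} as in the proof of Proposition~\ref{3atoms}.

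\textbf{Main obstacle.}
The genuine work is the forward direction: showing that consistency of the overdetermined $9$-equation system forces \emph{exactly} these two relations and no others. The difficulty is bookkeeping — several atoms ($r^2,r^3,r^4$ especially) receive contributions from more than one index pair, so the ``extra'' equations are not independent of the ones used to solve for the $b_i$, and one must identify which two combinations survive as nontrivial constraints. I expect the decisive step to be extracting the $a_3$-formula from the middle atom $r^4$, where the relation $b_3^2+2b_2b_4+\ (\text{any further cross terms})$ must equal the $\mu*t\mu$ coefficient; matching powers of $\sqrt r$ there (as in the $10$-atom elimination just completed for $card=10$) should force the square-root term $2\sqrt{a_1 a_5}$ to appear. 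Care is needed to ensure the resulting $b_3$ is nonnegative, which is where the positivity hypothesis on $\mu$ is actually used.
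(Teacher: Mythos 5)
Your plan has the same architecture as the paper's proof of Lemma \ref{lem36}: expand $\nu*\nu$ and $\mu*t\mu$ over the nine atoms, determine $b_1,b_2,b_4,b_5$ from the four uniquely represented extreme atoms, treat the remaining equations as compatibility constraints on $a_1,\dots,a_5$, and obtain sufficiency by exhibiting the explicit three-atom square root of $\mu$ itself (your $\mu_0$, with $c=\frac{a_2}{2\sqrt{a_1}}$) and invoking Proposition \ref{lemmeconvolution}. However, two of your concrete steps are wrong as stated, and executed literally they would sink the argument.

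First, your value of $b_4$ mixes normalizations: with your own convention $\lambda_i=r^{i-1}$, $\lambda_1=1$ (atoms of $\mu*t\mu$ at $r^0,\dots,r^8$), the equations $b_5^2=a_5^2r^4$ and $2b_4b_5=a_4a_5(r^3+r^4)$ force $b_4=a_4\frac{r+r^2}{2}$, not $a_4r^{3/2}\frac{1+r}{2}$; the half-power $r^{3/2}$ belongs to the normalization $\lambda_i=r^i$, under which $b_1,b_2,b_5$ also acquire a factor $\sqrt r$. Second, and as a consequence of this slip, you expect the residual equations to be ``polynomial identities in $\sqrt r$'' and plan to match powers of $\sqrt r$ as in the card-$10$ elimination. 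That mechanism is both unavailable and unneeded here: with consistently computed $b_i$ no half-integer powers of $r$ occur at all, and for a \emph{fixed} $r$ one cannot infer that the coefficients of a polynomial in $\sqrt r$ vanish (in the card-$10$ case the paper justifies that step by the invariance remark of Section 2; if you applied it here on top of the wrong $b_4$, you would ``prove'' $a_ia_j=0$ and conclude that no square root ever exists, contradicting the lemma). What the system actually yields, and what the paper does, is direct elimination at fixed $r$: the atoms $\lambda_1\lambda_3=\lambda_2^2$ and $\lambda_3\lambda_5=\lambda_4^2$ each determine $b_3$ linearly, giving $b_3=a_3\frac{1+r^2}{2}-\frac{a_2^2}{a_1}\frac{(1-r)^2}{8}=a_3\frac{1+r^2}{2}-\frac{a_4^2}{a_5}\frac{(1-r)^2}{8}$, whence $\frac{a_2^2}{a_1}=\frac{a_4^2}{a_5}$ since $r\neq1$; substituting $b_3$ into the middle-atom equation $2b_1b_5+b_3^2+2b_2b_4=a_1a_5(1+r^4)+a_3^2r^2+a_2a_4(r+r^3)$ then gives $a_3=\frac{a_2^2}{4a_1}+2\sqrt{a_1a_5}$. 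Your closing remark on nonnegativity of $b_3$ is pertinent precisely at this last step (and is glossed over in the paper): the middle-atom equation is quadratic in $a_3$, and it is $b_3\ge0$ — or, alternatively, the linear equation at the atom $\lambda_1\lambda_4=\lambda_2\lambda_3$, which gives $a_3=\frac{a_2^2}{4a_1}+\frac{2a_1a_4}{a_2}$ outright — that excludes the spurious root $\frac{a_2^2}{4a_1}\frac{(1-r)^2}{(1+r)^2}-2\sqrt{a_1a_5}$. With these two repairs your proposal coincides with the paper's proof.
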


To this aim, suppose $\lambda_i= r^{i-1}$ for $1\le i\le 5$. The identity $ \nu*\nu=\mu*t\mu$ provides 9 non linear equations. We  use  the  4  equations deriving from uniquely represented elements,
$$ b_1^2 = a_1^2,  \;  \; 
2b_1b_2 = a_1a_2(1+r),  \; \;  
2b_4b_5 = a_4a_5(r^3+r^4), \; \; 
\mbox{ and } b_5^2=a_5^2r^4,   
$$ 
to get     $$  b_1 = a_1, \; 
b_2 = a_2\frac{(1+r)}{2}, \; 
b_4=a_4\frac{r+r^2}{2} \; \text{ and } \; 
b_5=a_5r^2.$$

Comparing the expression of  $b_3$ obtained from  the next two  equations.  
$$
\left\{ 
\begin{array}{lll}
	2b_1b_3+ b_2^2 &=& a_1a_3(1+r^2)+a_2^2r\\
	2b_3b_5+ b_4^2 &=& a_3a_5(r^2+r^4)+a_4^2r^3\\
\end{array}
\right.
$$
we derive $$b_3=a_3\frac{(1+r^2)}{2}-\frac{a_2^2}{a_1}\frac{(1-r)^2}{8} =a_3\frac{(1+r^2)}{2}-\frac{a_4^2}{a_5}\frac{(1-r)^2}{8} .$$ Thus 
\begin{equation}
	a_2^2a_5=a_4^2a_1.
\end{equation}
Now,
Because of the additional compatibility equation
$$ 2b_1b_5+ b_3^2+ 2b_2b_4 = a_1a_5(1+r^4)+a_3^2r^2+ a_2a_4(r+r^3)$$  we obtain
$$a_3= \frac{a_2^2}{4a_1} +2\sqrt{a_1 a_5}.$$

To end the proof of Lemma \ref{lem36} and Proposition \ref{case5}, we 
use , as in  \cite[Theorem 4.2]{Curto2019}, the measure $\nu =\sqrt{a_1}\delta_{r}+\frac{a_2}{2\sqrt{a_1}}\delta_{r^3}+\sqrt{a_5}\delta_{r^5}$  to provide a square root of $\mu$.        
\section{The case of Atomic measures with  $6$ atoms.}
In this case, when $ {\tilde W}_\mu$ is  subnormal, we get $$11\le card(supp(\mu*t\mu) )\le \left[\frac{(6-1)^2+6}{2}\right] = 15.$$ 
We will see in this section that the above inequality is optimal.

We start with the next lemma which improves Lemma \ref{lambda2lambdap-1}.
\begin{lem}\label{lem2.6}Let $\mu=\sum_{i=1}^6a_i\delta_{\lambda_i}$  as above such that 
	$\mu*t\mu$ has a square  root. Then $\lambda_2^2\ne \lambda_1\lambda_{5}$ and  $\lambda_{5}^2\ne  \lambda_2\lambda_{6}$.
\end{lem}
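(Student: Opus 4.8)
The plan is to argue by contradiction, exploiting that once $\lambda_2^2=\lambda_1\lambda_5$ is assumed, almost all products on the low-index side become uniquely represented, leaving Proposition \ref{regles} and Corollary \ref{lambda1l} very little room. First I would reduce the two assertions to one: the transformation $t\mapsto t^{-1}$ discussed at the beginning of Section~2 (the case $x=-1$ of $\mu_x$) preserves the existence of a square root and sends each atom $\lambda_i$ to $1/\lambda_i$, so that, after reindexing in increasing order, it interchanges the conditions $\lambda_2^2=\lambda_1\lambda_5$ and $\lambda_5^2=\lambda_2\lambda_6$. Hence it suffices to rule out $\lambda_2^2=\lambda_1\lambda_5$.

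Assume $\lambda_2^2=\lambda_1\lambda_5$ and fix $\nu$ with $\nu*\nu=\mu*t\mu$. Since every product of two atoms both at least $\lambda_2$ is $\ge\lambda_2^2=\lambda_1\lambda_5>\lambda_1\lambda_4$, the atoms $\lambda_1\lambda_2,\lambda_1\lambda_3,\lambda_1\lambda_4$ admit no second representation, so $\{\lambda_1\lambda_2,\lambda_1\lambda_3,\lambda_1\lambda_4\}\subset{\cal U}r_\mu$. Applying Corollary \ref{lambda1l} for $k=3$ and $k=4$ (with $\lambda_1\lambda_3,\lambda_1\lambda_4\in{\cal U}r_\mu$) gives $\lambda_3^2,\lambda_4^2,\lambda_2\lambda_3,\lambda_2\lambda_4\in{\cal NU}r_\mu$; together with $\lambda_2^2=\lambda_1\lambda_5$ and $\lambda_5^2\in{\cal NU}r_\mu$ from Proposition \ref{regles}(5), this shows that the only uniquely represented squares are $\lambda_1^2$ and $\lambda_6^2$. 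Consequently the only instance of Proposition \ref{regles}(4) that can ever fire has corners $\lambda_1^2,\lambda_6^2$, giving the single usable implication $\{\lambda_1^2,\lambda_1\lambda_j,\lambda_j\lambda_6,\lambda_6^2\}\subset{\cal U}r_\mu\Rightarrow\lambda_j^2=\lambda_1\lambda_6$ for $j\in\{2,3,4\}$.

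Next I would use $\lambda_5^2\in{\cal NU}r_\mu$ to write $\lambda_5^2=\lambda_j\lambda_6$ with $j\in\{2,3,4\}$, these being the only admissible second representations of a value in $(\lambda_4\lambda_5,\lambda_5\lambda_6)$. The case $j=2$ is $(\lambda_2^2,\lambda_5^2)=(\lambda_1\lambda_5,\lambda_2\lambda_6)$, excluded for $p=6$ by Corollary \ref{lem2.5}. For $j=3$ the atom $\lambda_4\lambda_6$ cannot equal $\lambda_5^2=\lambda_3\lambda_6$, and $\lambda_5^2$ is its only possible second representation, so $\lambda_4\lambda_6\in{\cal U}r_\mu$; the rule above then gives $\lambda_4^2=\lambda_1\lambda_6$, after which Corollary \ref{lambda1l} (top, $k=4$) forces $\lambda_4\lambda_5\in{\cal NU}r_\mu$. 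Normalising $\lambda_1=1$ and recording $\lambda_5=\lambda_2^2$, $\lambda_6=\lambda_4^2$, $\lambda_3=\lambda_2^4/\lambda_4^2$, a direct comparison of products shows $\lambda_4\lambda_5$ has no second representation among the atoms, a contradiction.

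The remaining and hardest branch is $j=4$, i.e. $\lambda_5^2=\lambda_4\lambda_6$. Here $\lambda_4\lambda_6\in{\cal NU}r_\mu$, so the rule-(4) implication survives only for $j=3$, and I would split on whether $\lambda_3\lambda_6\in{\cal U}r_\mu$. If it is, Proposition \ref{regles}(4) yields $\lambda_3^2=\lambda_1\lambda_6$ and Corollary \ref{lambda1l} (top, $k=3$) forces $\lambda_3\lambda_5\in{\cal NU}r_\mu$, which the pinned ratios turn into a single numeric coincidence to be eliminated. If instead $\lambda_3\lambda_6\in{\cal NU}r_\mu$, its only option is $\lambda_3\lambda_6=\lambda_4\lambda_5$, so $\lambda_3,\lambda_4,\lambda_5,\lambda_6$ are geometric and the bottom instance of Corollary \ref{lambda1l} ($\lambda_2\lambda_3\in{\cal NU}r_\mu$) collapses the support to the exponents $\{0,3,4,5,6,7\}$; there the six uniquely represented atoms $\lambda_1^2,\lambda_1\lambda_2,\lambda_1\lambda_3,\lambda_1\lambda_4,\lambda_5\lambda_6,\lambda_6^2$ already determine all of $b_1,\dots,b_6$, and substituting them into one of the remaining compatibility equations yields, exactly as in the proof of Proposition \ref{case5}, a polynomial identity in $r$ forcing some $a_i$ to vanish. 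This last over-determination step is the crux: unlike the $j=3$ case it is not purely combinatorial, and the care lies in selecting the remaining equation whose failure is visible.
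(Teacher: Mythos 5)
Your plan follows the same broad strategy as the paper's proof (reduce to $\lambda_2^2=\lambda_1\lambda_5$ by symmetry, exploit uniquely represented atoms to rigidify the support, and finish with a weight-matching polynomial identity), and parts of it are correct and even cleaner than the paper: the case $\lambda_5^2=\lambda_2\lambda_6$ via Corollary \ref{lem2.5} and the purely combinatorial elimination of $\lambda_5^2=\lambda_3\lambda_6$ (showing $\lambda_4\lambda_5$ would have to be, yet cannot be, non-uniquely represented) are both valid, whereas the paper never needs this case split because it first derives the forced equalities $\lambda_2\lambda_3=\lambda_1\lambda_6$ and $\lambda_3^2=\lambda_2\lambda_4$ (each of $\lambda_2\lambda_3,\lambda_2\lambda_4\in{\cal NU}r_\mu$ has exactly one admissible second representation), which pin the support immediately. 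However, your treatment of the main branch $\lambda_5^2=\lambda_4\lambda_6$ has genuine gaps. First, the assertion that $\lambda_3\lambda_6\in{\cal NU}r_\mu$ leaves ``only the option'' $\lambda_3\lambda_6=\lambda_4\lambda_5$ is unjustified: $\lambda_3\lambda_6=\lambda_4^2$ is also an admissible second representation, and nothing you have established rules it out. (It can be ruled out, but only by using the forced equalities above, which you never derive: with $\lambda_1=1$ they give $\lambda_6=\lambda_2\lambda_3$ and $\lambda_3^2=\lambda_2\lambda_4$, so $\lambda_4^2=\lambda_3\lambda_6=\lambda_2\lambda_3^2=\lambda_2^2\lambda_4$, forcing $\lambda_4=\lambda_2^2=\lambda_5$, a contradiction.) Second, in the sub-case $\lambda_3\lambda_6\in{\cal U}r_\mu$ you reduce to ``a single numeric coincidence to be eliminated'' and stop; the elimination is missing. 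It can be completed: the coincidence is $\lambda_3\lambda_5=\lambda_4^2$, which pins the support to exponents $\{0,5,6,8,10,12\}$ in some base $s$, and there $\lambda_2\lambda_6=s^{17}$ is uniquely represented, contradicting Proposition \ref{regles}(5).

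Third, and by your own admission this is the crux, the terminal configuration with exponents $\{0,3,4,5,6,7\}$ is never actually refuted: ``selecting the remaining equation whose failure is visible'' is precisely where the paper does its real work, and it is not a formality. Concretely, from $\{\lambda_1^2,\lambda_1\lambda_2,\lambda_1\lambda_3,\lambda_1\lambda_4\}\subset{\cal U}r_\mu$ one gets $b_1=a_1\sqrt{\lambda_1}$ and $b_k=a_k\sqrt{\lambda_1}\,\frac{1+r^{k+1}}{2}$ for $k=2,3,4$; substituting into the equation at the doubly represented atom $\lambda_3^2=\lambda_2\lambda_4$, namely $b_3^2+2b_2b_4=a_3^2\lambda_1r^4+a_2a_4\lambda_1(r^3+r^5)$, and extracting the coefficient of $r^5$ (legitimate by the invariance $r\mapsto r^x$ noted at the start of Section~2, on which the paper's Propositions also rely) yields $a_2a_4=0$, which is impossible since all weights are positive. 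Until the three items above are carried out, what you have is an essentially correct plan with the same architecture as the paper's proof, but not a complete proof: the paper's combinatorial phase (forcing $\lambda_2\lambda_3=\lambda_1\lambda_6$, $\lambda_3^2=\lambda_2\lambda_4$, then $\lambda_5^2=\lambda_4\lambda_6$ and $\lambda_4\lambda_5=\lambda_3\lambda_6$) and its closing coefficient computation are exactly the pieces your write-up leaves open.
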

\begin{proof}

	Because of symmetry, we only show that $\lambda_2^2\ne \lambda_1\lambda_5$. If it is not the case, we will get $r_1=\frac{\lambda_2}{\lambda_1} = \frac{\lambda_5}{\lambda_2} $ and 
	$\{ \lambda_1^2,\lambda_1\lambda_2,\lambda_1\lambda_3,\lambda_1\lambda_4\}\subset{\cal U}r_\mu$, so by  Proposition \ref{regles} $\{ \lambda_2\lambda_3,\lambda_2\lambda_4\}\subset {\cal NU}r_{\mu} $ 
	
	We will have  necessarily, 
	\begin{equation}\label{(2,3)=(1,6)}
		\lambda_2\lambda_3=\lambda_1\lambda_6 \mbox{ and } \lambda_3^2=\lambda_2\lambda_4.
	\end{equation}
	Then   $\frac{\lambda_2}{\lambda_1} =\frac{\lambda_6}{\lambda_3} = \frac{\lambda_5}{\lambda_2}  $ and $\frac{\lambda_3}{\lambda_2}=\frac{\lambda_4}{\lambda_3}$. From Proposition \ref{regles}  again $\{\lambda_3\lambda_6, \lambda_4\lambda_6, \lambda_5^2\} \subset {\cal NU}r_{\mu}. $ Thus 
	$\lambda_5^2=\lambda_4\lambda_6$ (or equivalently $\frac{\lambda_5}{\lambda_4}=\frac{\lambda_6}{\lambda_5}$). Otherwise $\lambda_5^2=\lambda_3\lambda_6 $ and then $\lambda_4\lambda_6\in {\cal U}r_{\mu}$. Contradiction.
	
	Writing    $\lambda_2\lambda_4\lambda_5=\lambda_3^2\lambda_5=\lambda_2\lambda_6\lambda_3$, we derive that 
	$ \lambda_5\lambda_4=\lambda_3\lambda_6,  $
	which leads to $\frac{\lambda_3}{\lambda_2}=\frac{\lambda_4}{\lambda_3}=\frac{\lambda_5}{\lambda_4}=\frac{\lambda_6}{\lambda_5}=r_2.$ We deduce that  $r_1=\frac{\lambda_6}{\lambda_3} =r_2^3$ and then 
	$\lambda_k= r^{k+1}\lambda_1$ for $1 <k\le 6$, with   $r=r_2$.  
	Since $\{ \lambda_1^2,\lambda_1\lambda_2,\lambda_1\lambda_3,\lambda_1\lambda_4\}\subset{\cal U}r_\mu$, we get:
	$$
	b_1^2=a_1^2\lambda_1,\; \; 
	2b_1b_2=a_1a_2\lambda_1(1+r^3), \; \; 
	2b_1b_3=a_1a_3\lambda_1(1+r^4), \;  \mbox{  and  }        
	2b_1b_4=a_1a_4\lambda_1(1+r^5). $$
	That gives $b_1=a_1\sqrt{\lambda_1}, \; b_2=a_2\sqrt{\lambda_1}\frac{1+r^3}{2}, \; b_3=a_3\sqrt{\lambda_1}\frac{1+r^4}{2}$ and $\; b_4=a_4\sqrt{\lambda_1}\frac{1+r^5}{2}.$\\
	Replacing in the additional equation $\, \, b_3^2+2b_2b_4=a_3^2\lambda_1r^4+a_2a_4\lambda_1r^3(1+r),$ we obtain
	$$a_3^2\frac{(1+r^4)^2}{4}+a_2a_4\frac{(1+r^3)(1+r^5)}{2}-a_3^2r^4-a_2a_4r^3(1+r)=0 $$
	and then $\frac12a_2a_4= 0$ as it is the coefficient of $r^5$.
	Contradiction. \end{proof}
\begin{lem}\label{lem2.8}Let $\mu=\sum_{i=1}^6a_i\delta_{\lambda_i}$  as above such that 
	$\mu*t\mu$ has a square  root. Then either $\lambda_2^2\ne \lambda_1\lambda_{4}$ or   $\lambda_{5}^2\ne  \lambda_3\lambda_{6}$.
\end{lem}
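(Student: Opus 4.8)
The plan is to argue by contradiction: assume both $\lambda_2^2=\lambda_1\lambda_4$ and $\lambda_5^2=\lambda_3\lambda_6$, and show that these two equalities, together with the existence of a square root, pin the support down to one rigid shape on which no square root can survive. Normalising $\lambda_1=1$ and writing $\lambda_2=s>1$, the first equality gives $\lambda_4=s^2$, while $\lambda_5^2=\lambda_3\lambda_6$ is kept in reserve. The first task is purely combinatorial: use Proposition \ref{regles} and Corollary \ref{lambda1l} to determine $\lambda_3,\lambda_5,\lambda_6$ completely in terms of $s$.

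For that reduction, I would first note that $\lambda_1\lambda_3=\lambda_3<s^2=\lambda_2^2$ admits no second representation (any other one needs two factors $\ge s$, hence a product $\ge s^2>\lambda_3$), so $\lambda_1\lambda_3\in\mathcal{U}r_\mu$. Corollary \ref{lambda1l} with $k=3$ then forces $\{\lambda_2\lambda_3,\lambda_3^2\}\subset\mathcal{NU}r_\mu$. Comparing sizes, a non-trivial representation of $\lambda_2\lambda_3=s\lambda_3\in(s^2,s^3)$ must have the form $\lambda_1\lambda_\ell=\lambda_\ell$ with $\ell\in\{5,6\}$; the subcase $\lambda_6=s\lambda_3$ is excluded because it leaves $\lambda_3^2$ with no admissible second representation (and degenerates into $\lambda_5=\lambda_4$ at the boundary), contradicting $\lambda_3^2\in\mathcal{NU}r_\mu$. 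Hence $\lambda_5=s\lambda_3$, and $\lambda_5^2=\lambda_3\lambda_6$ yields $\lambda_6=s^2\lambda_3$. Finally, among the surviving products the only way to make $\lambda_3^2$ non-uniquely represented is $\lambda_3^2=\lambda_2\lambda_4=s^3$, i.e.\ $\lambda_3=s^{3/2}$. This forces the support to be $\{1,s,s^{3/2},s^2,s^{5/2},s^{7/2}\}$, and inspecting the coincidence pattern gives $\mathcal{U}r_\mu=\{\lambda_1^2,\lambda_1\lambda_2,\lambda_1\lambda_3,\lambda_4\lambda_6,\lambda_5\lambda_6,\lambda_6^2\}$. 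I expect this case analysis to be the main obstacle, since each alternative coincidence pattern has to be ruled out by hand.

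With the support fixed, I would write $\nu=\sum_{i=1}^6 b_i\delta_{\lambda_i}$ for a square root of $\mu*t\mu$ and match coefficients at the six uniquely represented atoms, exactly as in Proposition \ref{regles}. This gives $b_1=a_1$, $b_2=\tfrac{a_2(1+s)}{2}$, $b_3=\tfrac{a_3(1+s^{3/2})}{2}$, $b_4=\tfrac{a_4 s^{1/4}(1+s^{3/2})}{2}$, $b_5=\tfrac{a_5 s^{3/4}(1+s)}{2}$ and $b_6=a_6 s^{7/4}$.

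The contradiction then comes from the triply represented atom $\lambda_1\lambda_6=\lambda_2\lambda_5=\lambda_3\lambda_4$ (the atom of exponent $7/2$), whose matching equation reads
\[
2b_1b_6+2b_2b_5+2b_3b_4=a_1a_6(1+s^{7/2})+a_2a_5(s+s^{5/2})+a_3a_4(s^{3/2}+s^2).
\]
Substituting the $b_i$ and setting $u=s^{1/4}$, every term on the left carries a positive power of $u$, whereas the right-hand side has the nonzero constant term $a_1a_6$. As in Lemma \ref{lem2.6}, the scaling invariance recorded at the beginning of Section 2 lets me treat this as a polynomial identity in $u$, so matching constant terms gives $a_1a_6=0$, contradicting $a_1,a_6>0$. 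This completes the proof.
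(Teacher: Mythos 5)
Your proof is correct and takes essentially the same route as the paper: both arguments use the uniquely-represented-atom combinatorics to force the support into the rigid form $\{1,r^2,r^3,r^4,r^5,r^7\}$ (your $s=r^2$), solve for the $b_i$ at the uniquely represented atoms, and then kill a product of masses by extracting a coefficient from a polynomial identity in a fractional power of $r$ --- the paper gets $a_1a_4=0$ from the atom $\lambda_2^2=\lambda_1\lambda_4$, while you get $a_1a_6=0$ from the triply represented atom $\lambda_1\lambda_6=\lambda_2\lambda_5=\lambda_3\lambda_4$. Your only compressed step, ruling out $\lambda_2\lambda_3=\lambda_1\lambda_6$, does hold as claimed (each candidate second representation $\lambda_1\lambda_5$, $\lambda_2\lambda_4$, $\lambda_2\lambda_5$ of $\lambda_3^2$ is eliminated using $\lambda_5^2=\lambda_3\lambda_6$), though the paper disposes of it in one line via Proposition \ref{regles}(5), since that coincidence would leave $\lambda_1\lambda_5$ uniquely represented.
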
 
\begin{proof}
	Suppose that   $(\lambda_2^2,\lambda_5^2)= (\lambda_1\lambda_4, \lambda_6\lambda_3)$  (or equivalently 
	$\frac{\lambda_2}{\lambda_1}= \frac{\lambda_4}{\lambda_2}$ and $\frac{\lambda_6}{\lambda_5}= \frac{\lambda_5}{\lambda_3})$. It follows that  $\{\lambda_1\lambda_3, \lambda_6\lambda_4\}\subset{\cal U}r_\mu$. Using  Proposition \ref{regles} (1.), it follows that  $\lambda_2\lambda_3$ and  $\lambda_4\lambda_5$ are  
	not uniquely represented. Then $\lambda_2\lambda_3\in \{ \lambda_1\lambda_5,\lambda_1\lambda_6\}$ and $\lambda_4\lambda_5\in \{ \lambda_1\lambda_6,\lambda_2\lambda_6\}$ . 
	If we assume that  $\lambda_2\lambda_3=\lambda_1\lambda_6$ (or symmetrically  $\lambda_4\lambda_5=\lambda_1\lambda_6$), we will get  
	$\lambda_1\lambda_5\in{\cal U}r_\mu$, which contradicts Proposition \ref{regles} (5.).
	We derive that 
	$$    \lambda_2\lambda_3=\lambda_1\lambda_5 \mbox{ and } \lambda_4\lambda_5=\lambda_2\lambda_6 $$
	and then 
	$(r_1=r_2^2)=\frac{\lambda_2}{\lambda_1}= \frac{\lambda_4}{\lambda_2}=\frac{\lambda_5}{\lambda_3}= \frac{\lambda_6}{\lambda_5}.$

	Since $\{\lambda_1^2,\lambda_1\lambda_3\}\in{\cal U}r_\mu$ and $\{\lambda_6^2,\lambda_4\lambda_6\}\in{\cal U}r_\mu$ we get $\{\lambda_3^2\}\not\in{\cal U}r_\mu$, $\{\lambda_4^2\}\not\in{\cal U}r_\mu$.
	
	We clearly  have  $\frac{\lambda_3}{\lambda_2}= \frac{\lambda_4}{\lambda_3}= \frac{\lambda_5}{\lambda_4}=(r_2)$ and hence    $\lambda_i=r^i\lambda_1$ for $1<i<6$, and $\lambda_6=r^7\lambda_1$.
	From the description  above, we obtain:
	$$  b_1=a_1\sqrt{\lambda_1}, \; 
	b_2=\frac12 a_2\sqrt{\lambda_1}(1+r^2), \; 
	b_6=a_6\sqrt{r\lambda_1}r^3, \:         
	b_4=\frac12 a_4\sqrt{r\lambda_1}(1+r^3), $$    
	and  
	$$
	b^2_2+2b_1b_4= a_2^2 r^2\lambda_1+a_1a_4\lambda_1(1+r^4).
	$$
	This leads to the next identity, 
	$$   a_2^2 r^2+a_1a_4(1+r^4)-\frac14 a_2^2(1+r^2)^2-a_1a_4\sqrt{r}(1+r^3)=0.
	$$
	and  the coefficient  $a_1a_4$ of $\sqrt{r}$ vanishes as before. Contradiction.
\end{proof}

It follows  that only one of the 3 next cases may occur $(\lambda_2^2,\lambda_5^2)= (\lambda_1\lambda_3, \lambda_3\lambda_6)$, $(\lambda_2^2,\lambda_5^2)= (\lambda_1\lambda_4, \lambda_4\lambda_6)$ and 
$(\lambda_2^2,\lambda_5^2)= (\lambda_1\lambda_3, \lambda_4\lambda_6)$. From  the first two symmetric cases, we have

\begin{prop}\label{lem2.10}Let $\mu=\sum_{i=1}^6a_i\delta_{\lambda_i}$ be with $6$ atoms.\\  I) If $(\lambda_2^2,\lambda_5^2)= (\lambda_1\lambda_4, \lambda_4\lambda_6)$,
	then the following are equivalent
	\begin{enumerate}
		\item $\mu*t\mu$ has a square  root. 
		\item $\lambda_3^2=\lambda_1\lambda_6$ and
		$(a_2^2, a_3^2,a_5^2) =(4a_1a_4, 4a_1a_6,4a_4a_6)$.
		\item $\mu$ has a square  root. 
	\end{enumerate}
	II) If $(\lambda_2^2,\lambda_5^2)= (\lambda_1\lambda_3, \lambda_3\lambda_6)$,
	then the following are equivalent
	\begin{enumerate}
		\item $\mu*t\mu$ has a square  root. 
		\item $\lambda_4^2=\lambda_1\lambda_6$ and
		$(a_2^2, a_4^2,a_5^2) =(4a_1a_3, 4a_1a_6,4a_3a_6)$.
		\item $\mu$ has a square  root. 
	\end{enumerate}
\end{prop}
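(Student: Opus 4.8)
The plan is to establish the cycle $(1)\Rightarrow(2)\Rightarrow(3)\Rightarrow(1)$ in part I and to obtain part II by the symmetric argument, reversing the order of the atoms, i.e. interchanging the roles of $(\lambda_1,\lambda_2,\lambda_3)$ and $(\lambda_6,\lambda_5,\lambda_4)$. By the reductions at the beginning of Section 2 I may assume $\lambda_1=1$; this is harmless since rescaling the atoms fixes both the relation $\lambda_3^2=\lambda_1\lambda_6$ and the mass conditions $(a_2^2,a_3^2,a_5^2)=(4a_1a_4,4a_1a_6,4a_4a_6)$.

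\emph{Step 1 ($(1)\Rightarrow(2)$: fixing the support).} Assume $\nu*\nu=\mu*t\mu$. From $\lambda_2^2=\lambda_1\lambda_4$ one has $\lambda_3<\lambda_4=\lambda_2^2$, and since any product of two atoms $\geq\lambda_2$ is at least $\lambda_2^2>\lambda_3$, the atom $\lambda_1\lambda_3$ is uniquely represented. Corollary \ref{lambda1l} (with $k=3$) then forces $\lambda_2\lambda_3,\lambda_3^2\in{\cal NU}r_\mu$. A second representation of $\lambda_2\lambda_3$ cannot use two atoms $\geq\lambda_2$ (the next such product $\lambda_2\lambda_4$ exceeds it, while $\lambda_2^2$ is smaller), so it must involve $\lambda_1$, giving $\lambda_2\lambda_3\in\{\lambda_1\lambda_5,\lambda_1\lambda_6\}$. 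If $\lambda_2\lambda_3=\lambda_1\lambda_6$, then $\lambda_1\lambda_5$ lies strictly between $\lambda_2^2$ and $\lambda_2\lambda_3$ and admits no second representation, contradicting Proposition \ref{regles}(5); hence $\lambda_5=\lambda_2\lambda_3$. Combined with $\lambda_5^2=\lambda_4\lambda_6=\lambda_2^2\lambda_6$ this yields $\lambda_6=\lambda_3^2$, i.e. $\lambda_3^2=\lambda_1\lambda_6$, so that $supp(\mu)=\{1,\lambda_2,\lambda_3,\lambda_2^2,\lambda_2\lambda_3,\lambda_3^2\}$ with uniquely represented atoms $\lambda_1^2,\lambda_1\lambda_2,\lambda_1\lambda_3,\lambda_2\lambda_4,\lambda_4^2,\lambda_4\lambda_5,\lambda_3\lambda_6,\lambda_5\lambda_6,\lambda_6^2$. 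This elimination of the stray identifications is the main obstacle of the proof.

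\emph{Step 2 ($(1)\Rightarrow(2)$: matching coefficients).} Using (\ref{nu*nu}) and matching the uniquely represented atoms gives $b_1=a_1$, $b_2=\tfrac12a_2(1+\lambda_2)$, $b_3=\tfrac12a_3(1+\lambda_3)$, $b_4=a_4\lambda_2$, $b_5=\tfrac12a_5(\lambda_2+\lambda_3)$ and $b_6=a_6\lambda_3$. Substituting these into the three ``square'' non-uniquely represented atoms $\lambda_2^2=\lambda_1\lambda_4$, $\lambda_3^2=\lambda_1\lambda_6$ and $\lambda_5^2=\lambda_4\lambda_6$, and using the identities $(1+x)^2-4x=(1-x)^2$ and $(\lambda_2+\lambda_3)^2-4\lambda_2\lambda_3=(\lambda_2-\lambda_3)^2$, each equation collapses, after cancelling the nonzero factors $(1-\lambda_2)^2$, $(1-\lambda_3)^2$, $(\lambda_2-\lambda_3)^2$, to exactly $a_2^2=4a_1a_4$, $a_3^2=4a_1a_6$ and $a_5^2=4a_4a_6$. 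This proves $(2)$.

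\emph{Step 3 ($(2)\Rightarrow(3)\Rightarrow(1)$).} Since $\lambda_4=\lambda_2^2$, $\lambda_5=\lambda_2\lambda_3$ and $\lambda_6=\lambda_3^2$, the support of $\mu$ is the set of pairwise products of $\{1,\lambda_2,\lambda_3\}$, so I set $\mu_0=\sqrt{a_1}\,\delta_{1}+\sqrt{a_4}\,\delta_{\lambda_2}+\sqrt{a_6}\,\delta_{\lambda_3}$. Expanding $\mu_0*\mu_0$ and using the relations $a_2=2\sqrt{a_1a_4}$, $a_3=2\sqrt{a_1a_6}$, $a_5=2\sqrt{a_4a_6}$ (which are the conditions in $(2)$ written for nonnegative masses) shows $\mu_0*\mu_0=\mu$, that is $(3)$. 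Finally, from $\mu=\mu_0*\mu_0$ one gets $\mu*t\mu=(\mu_0*t\mu_0)*(\mu_0*t\mu_0)$, so $\mu_0*t\mu_0$ is a nonnegative square root of $\mu*t\mu$ and $(1)$ holds (equivalently, ${\tilde W}_\mu$ is subnormal by Proposition \ref{lemmeconvolution}). Part II is obtained by running the same three steps with the reflected indexing, where the square root is $\mu_0=\sqrt{a_1}\,\delta_{1}+\sqrt{a_3}\,\delta_{\lambda_2}+\sqrt{a_6}\,\delta_{\lambda_4}$.
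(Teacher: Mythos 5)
Your Steps 1 and 3 are correct and essentially coincide with the paper's argument: $\lambda_1\lambda_3\in{\cal U}r_\mu$ forces $\lambda_2\lambda_3=\lambda_1\lambda_5$ (via Proposition \ref{regles}(5)), whence $\lambda_6=\lambda_3^2$, and the square root $\mu_0=\sqrt{a_1}\delta_1+\sqrt{a_4}\delta_{\lambda_2}+\sqrt{a_6}\delta_{\lambda_3}$ is the paper's. The gap is in Step 2. Once the support is known to be $\{1,\lambda_2,\lambda_3,\lambda_2^2,\lambda_2\lambda_3,\lambda_3^2\}$, your list of nine uniquely represented atoms is only valid \emph{generically}: hypothesis (1) does not exclude the two degenerate configurations $\lambda_3^2=\lambda_2^3$ (the paper's case $R=r^2$) and $\lambda_3^3=\lambda_2^4$ (the paper's case $R=r^3$). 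In the first, $\lambda_2\lambda_4=\lambda_1\lambda_6=\lambda_3^2$, $\lambda_4^2=\lambda_2\lambda_6=\lambda_3\lambda_5$ and $\lambda_4\lambda_5=\lambda_3\lambda_6$; in the second, $\lambda_4^2=\lambda_3\lambda_6$. These cases genuinely occur and satisfy (1): for instance $supp(\mu)=\{1,4,8,16,32,64\}$ with $\mu=\mu_0*\mu_0$, $\mu_0=\sqrt{a_1}\delta_1+\sqrt{a_4}\delta_4+\sqrt{a_6}\delta_8$, fulfills the hypothesis of I) with $R=r^2$.

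Concretely, when $\lambda_3^2=\lambda_2^3$ your Step 2 breaks twice: none of $\lambda_4^2$, $\lambda_2\lambda_4$, $\lambda_4\lambda_5$ is uniquely represented, so $b_4=a_4\lambda_2$ cannot be read off from any uniquely represented atom; and the atom $\lambda_3^2=\lambda_1\lambda_6=\lambda_2\lambda_4$ carries a \emph{triple} representation, so the equation there reads $b_3^2+2b_1b_6+2b_2b_4=a_3^2\lambda_3+a_1a_6(1+\lambda_6)+a_2a_4(\lambda_2+\lambda_4)$ and does not collapse to $a_3^2=4a_1a_6$ as you claim. (When $\lambda_3^3=\lambda_2^4$, your $b_4$ survives via $\lambda_2\lambda_4$, but your assertion $\lambda_4^2,\lambda_3\lambda_6\in{\cal U}r_\mu$ is false.) This is exactly why the paper splits into the three cases $R=r^2$, $R=r^3$, $R\notin\{r^2,r^3\}$, draws the three spectral pictures, determines $b_4$ from the always-doubly-represented atom $\lambda_2^2=\lambda_1\lambda_4$ (giving $b_4=\frac{\sqrt{\lambda_1}}{2}\bigl[a_4(1+R^2)-\frac{a_2^2}{a_1}\frac{(1-R)^2}{4}\bigr]$ rather than $a_4\lambda_2$ a priori), and then extracts the mass relations from three compatibility equations that hold in all three cases. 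As written, your proof of $(1)\Rightarrow(2)$ covers only the generic ($15$-atom) configuration and leaves the $12$- and $14$-atom configurations unproved.
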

\textit{Proof.} Because of symmetry, we only show $I)$. Since the implication  $3. \Rightarrow 2.$ is well known, it remains to show  $1. \Rightarrow 2.$  and  $2. \Rightarrow 3.$\\ 
$1. \Rightarrow 2.$ From $\lambda_2^2=\lambda_1\lambda_4$, as in the proof of Lemma \ref{lem2.8} we obtain 
$   \lambda_2\lambda_3=\lambda_1\lambda_5$. Using $\lambda_5^2= \lambda_4\lambda_6, $ we get $ \lambda_3\lambda_5=\lambda_2\lambda_6$.
Thus $$ \frac{\lambda_2}{\lambda_1}=\frac{\lambda_4}{\lambda_2}= \frac{\lambda_5}{\lambda_3} =R \mbox{ and }  \frac{\lambda_3}{\lambda_2}=\frac{\lambda_5}{\lambda_4}=  \frac{\lambda_6}{\lambda_5}=r
$$
we deduce that,
$$
\begin{array}{ccccc}
	\lambda_2=R\lambda_1& \lambda_3=rR\lambda_1 & \lambda_4=R^2\lambda_1  
	& \lambda_5=rR^2\lambda_1 &  \lambda_6=(rR)^2\lambda_1
\end{array}
$$
If  $\lambda_3\lambda_6 \in {\cal NU}r_\mu $ then  clearly $\lambda_3\lambda_6 \in \{\lambda_2^4,\lambda_4\lambda_5\}$. If $\lambda_3\lambda_6 = \lambda^2_4$, we get $R=r^3$ and if $\lambda_3\lambda_6 = \lambda_4\lambda_5$, we have $R=r^2$ . Hence arises the  next 3 cases 
\begin{itemize}
	\item $R=r^2.$ We obtain  the next spectral picture of $supp(\mu*t\mu)$:
\end{itemize}   
\tiny{
$$
\begin{array}{ccccccccccccc}
	& &&&&\lambda_2\lambda_4&&  \lambda_4^2 &\lambda_4\lambda_5&&&\\
	& && && \|&& \|&\|&&&\\
	\lambda_1^2&<\lambda_1\lambda_2& <\lambda_1\lambda_3&<\lambda_1\lambda_4&<\lambda_2\lambda_3&<\lambda_1\lambda_6 &<\lambda_2\lambda_5&<\lambda_2\lambda_6&<\lambda_3\lambda_6&<\lambda_4\lambda_6&<\lambda_5\lambda_6&<\lambda_6^2 \\
	& &&\|&\|&\|&\|&\|&& \|&&\\
	& && \lambda_2^2&\lambda_1\lambda_5&\lambda_3^2 &\lambda_3\lambda_4& \lambda_3\lambda_5& &\lambda_5^2&
\end{array}
$$}\normalsize
\begin{itemize}
	\item $R=r^3.$  We get the next spectral picture of $supp(\mu*t\mu)$: \end{itemize} 
\tiny{$$ \begin{array}{cccccccccccccc}
	\lambda_1^2&<\lambda_1\lambda_2& <\lambda_1\lambda_3&<\lambda_1\lambda_4&<\lambda_2\lambda_3&<\lambda_1\lambda_6&<\lambda_2\lambda_4 &<\lambda_2\lambda_5&<\lambda_2\lambda_6&<\lambda_4^2&<\lambda_4\lambda_5&<\lambda_4\lambda_6&<\lambda_5\lambda_6&<\lambda_6^2 \\
	&&&\|&\|&\|&&\|&\|&\|&& \|&&\\
	&&&\lambda_2^2&\lambda_1\lambda_5&\lambda_3^2& &\lambda_3\lambda_4& \lambda_3\lambda_5&\lambda_3\lambda_6& &\lambda_5^2&
\end{array}
$$}\normalsize
\begin{itemize}
	\item $R\notin \{r^2, r^3\}.$  We have a "Maximal cardinal support" ( 15 atoms) and we get  the next spectral picture of $supp(\mu*t\mu)$:\end{itemize}
\tiny{$$ \hspace{-2cm}\begin{array}{ccccccccccccccc}
	\lambda_1^2&<\lambda_1\lambda_2& <\lambda_1\lambda_3&<\lambda_1\lambda_4&<\lambda_2\lambda_3&<\lambda_2\lambda_4&<\lambda_1\lambda_6 &<\lambda_2\lambda_5&<\lambda_4^2&<\lambda_2\lambda_6&<\lambda_4\lambda_5&<\lambda_3\lambda_6&<\lambda_4\lambda_6&<\lambda_5\lambda_6&<\lambda_6^2 \\
	&&&\|&\|&&\|&\|&&\|&&& \|&&\\
	&&&\lambda_2^2&\lambda_1\lambda_5&&\lambda_3^2 &\lambda_3\lambda_4&& \lambda_3\lambda_5&& &\lambda_5^2&
\end{array}
$$}\normalsize
From the associated diagrams, in the  3 cases above, We use the next common   equations to determine, $b_1, b_2, b_3,b_4, b_5$ and $  b_6,$
$$
\left\{ 
\begin{array}{lll}
	b_1^2 = a_1^2\lambda_1  & 2b_1b_2 = a_1a_2\lambda_1(1+R)\\
	2b_1b_3 = a_1a_3{\lambda_1}(1+rR)&
	b_2^2+2b_1b_4 =a_2^2 \lambda_1R+ a_1a_4{\lambda_1}(1+R^2)\\
	2b_5b_6 = a_5a_6\lambda_1rR^2(1+r) &
	b_6^2=a_6^2{\lambda_1}(rR)^2,  
\end{array}
\right.
$$
so that   $$  
\left\{ 
\begin{array}{ccc}
	b_1 = a_1\sqrt{\lambda_1},  &  b_2 = a_2\sqrt{\lambda_1}\frac{(1+R)}{2}, & b_3 = a_3\sqrt{\lambda_1}\frac{(1+rR)}{2}\\
	b_4=\frac{\sqrt{\lambda_1}}{2}\left[a_4(1+R^2)-\frac{a_2^2}{a_1}\frac{(1-R)^2}{4}\right] & b_5=a_5\sqrt{\lambda_1}\frac{R(1+r)}{2} &b_6=a_6\sqrt{\lambda_1}rR
\end{array} 
\right.
$$
Plugging in the additional 3 common equations,
$$
\left\{ 
\begin{array}{lll}
	2b_1b_5+ 2b_2b_3 &=& a_1a_5{\lambda_1}(1+rR^2)+ a_2a_3{\lambda_1}R(1+r)\\
	2b_2b_5+ 2b_3b_4 &=& a_2a_5{\lambda_1}R(1+rR)+a_3a_4{\lambda_1}R(r+R)\\    
	2b_4b_6+ b_5^2 &=& a_4a_6{\lambda_1}R^2(r^2+1)+a_5^2{\lambda_1}rR^2, \\
\end{array}
\right.
$$
we obtain the next necessary compatibility conditions:

$$ (2a_1a_5 -a_2a_3)(1-R)(1-rR) = 0,$$
$$ (2a_3a_4 -a_2a_5)(1-R^2)(1-rR) = 0,$$
$$
R{(1-r)^2}\left(4a_4-\frac{a_5^2}{a_6}\right)=0.
$$
and hence 
$$  2a_5a_1=a_2a_3, \; \; 
2a_3a_4=a_2a_5 \; \mbox{ and } a_5^2=4a_4a_6$$
It is clear that the  tree obtained relations  are equivalent to $a_2^2=4a_1a_4, \, a_3^2=4a_1a_6$ and $a_5^2=4a_4a_6$.\\
$2. \Rightarrow 3.$  Direct computations show that the 3 atomic measure  
$\sqrt{a_1}\delta_{\sqrt{\lambda_1}}+\sqrt{a_4}\delta_{R\sqrt{\lambda_1}}+\sqrt{a_6}\delta_{rR\sqrt{\lambda_1}}$
is  a square root of  $\mu$.
\begin{rem} For $p=6$, in the previous situations, the number of equations arising from the diagrams are either 12, 14 or 15. We only used the 9 common ones. As shown by Proposition \ref{lem2.10}, the remaining   compatibility   equations are automatically fulfilled.
\end{rem}

\begin{prop}\label{lem2.11} Let $\mu=\sum_{i=1}^6a_i\delta_{\lambda_i}$  be  such that $(\lambda_2^2,\lambda_5^2)= (\lambda_1\lambda_3, \lambda_4\lambda_6)$. Then ${\tilde W}_\mu$ is not subnormal.
\end{prop}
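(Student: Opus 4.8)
The plan is to argue by contradiction through the square-root reformulation. By Proposition \ref{lemmeconvolution} it suffices to show that $\mu*t\mu$ admits no square root, so I assume there is a nonnegative $\nu=\sum_{i=1}^{6}b_i\delta_{\lambda_i}$ with $\nu*\nu=\mu*t\mu$ (recall $supp(\nu)=supp(\mu)$) and aim for a contradiction. The hypotheses $\lambda_2^2=\lambda_1\lambda_3$ and $\lambda_5^2=\lambda_4\lambda_6$ say exactly that $\{\lambda_1,\lambda_2,\lambda_3\}$ and $\{\lambda_4,\lambda_5,\lambda_6\}$ are each geometric progressions; after the normalisation $\lambda_1=1$ I would write $\lambda_2=r_1,\ \lambda_3=r_1^2$ and $\lambda_4=s,\ \lambda_5=sr_2,\ \lambda_6=sr_2^2$ with $r_1,r_2>1$ and $r_1^2<s$. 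Thus the support of $\mu*t\mu$ consists of the monomials $r_1^{a}s^{b}r_2^{c}$ arising from the two internal blocks and the nine cross products $\lambda_i\lambda_j$ with $i\in\{1,2,3\}$, $j\in\{4,5,6\}$.

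The combinatorial heart of the argument is to determine how the two blocks interlock. By Proposition \ref{regles}(5) we have $\{\lambda_1\lambda_5,\lambda_1\lambda_6,\lambda_2\lambda_6\}\subset{\mathcal{NU}r_\mu}$, so each of these cross products must coincide with another product; running through the admissible equalities among the monomials $r_1^{a}s^{b}r_2^{c}$, I expect that several of the forced coincidences (for instance $\lambda_1\lambda_6=\lambda_2\lambda_5$ or $\lambda_1\lambda_6=\lambda_3\lambda_4$) push towards $r_1=r_2$, while the remaining ones impose a single relation between $s$ and the common ratio. Supplementing this with the coincidences of the type $\lambda_2\lambda_3=\lambda_1\lambda_5$ produced by Proposition \ref{regles}(1) and Corollary \ref{lambda1l}, and recalling the bound $11\le card(supp(\mu*t\mu))\le 15$ of Proposition \ref{card}, should rigidify $supp(\mu*t\mu)$ into a short list of subcases, which I would organise by their spectral diagram exactly as in the proof of Proposition \ref{lem2.10}.

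In each surviving configuration I would match the coefficients of the uniquely represented atoms in $\nu*\nu=\mu*t\mu$, always using the four extremal unique atoms $\lambda_1^2,\lambda_1\lambda_2,\lambda_5\lambda_6,\lambda_6^2$ together with the relations $b_i^2=a_i^2\lambda_i$ and $2b_ib_j=a_ia_j(\lambda_i+\lambda_j)$, to solve for $b_1,\dots,b_6$ as explicit functions of the $a_i$ and the ratio. Substituting these into the compatibility equations coming from the non-uniquely represented atoms produces a polynomial identity in the ratio. As recorded at the start of Section 2, a square root exists for one value of the ratio if and only if it exists for all of them, so this identity must hold for arbitrary $r$; reading off a suitably isolated coefficient (a top-degree monomial, as in the $p=4,5$ cases and in Lemmas \ref{lem2.8} and the proof of Proposition \ref{lem2.10}) then forces some product $a_ia_j=0$, contradicting the positivity of the masses.

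The step I expect to be the main obstacle is the structural analysis: unlike the clean single-parameter situations treated earlier, here two independent ratios $r_1,r_2$ and the offset $s$ must be disentangled, and all the parasitic overlap patterns among the nine cross products have to be excluded before the coefficient-matching machinery applies. Once the admissible geometric shapes are pinned down, the terminal contradiction is routine polynomial bookkeeping of precisely the same flavour as in Proposition \ref{lem2.10}.
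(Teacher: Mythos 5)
Your strategy is the same as the paper's (normalise, treat $\{\lambda_1,\lambda_2,\lambda_3\}$ and $\{\lambda_4,\lambda_5,\lambda_6\}$ as two geometric blocks with ratios $r_1,r_2$, use the $\mathcal{NU}r_\mu$ machinery to pin down how the blocks interlock, then match coefficients on uniquely represented atoms and kill a polynomial coefficient), but what you have written is a programme, not a proof, and the step you explicitly defer --- the structural analysis --- is precisely the substance of the paper's argument. Your guiding expectation that the forced coincidences ``push towards $r_1=r_2$'' is moreover inaccurate: the paper's enumeration, organised by whether $\lambda_1\lambda_4$ and $\lambda_3\lambda_6$ lie in $\mathcal{U}r_\mu$ or $\mathcal{NU}r_\mu$, produces genuinely distinct patterns, several with $r_1=r_2^2$ rather than $r_1=r_2$. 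For instance $(\lambda_1\lambda_4,\lambda_3\lambda_6)=(\lambda_2\lambda_3,\lambda_4^2)$ forces the support $\{1,r^2,r^4,r^6,r^7,r^8\}$ and is killed combinatorially because $\lambda_1\lambda_5$ then becomes uniquely represented, contradicting Proposition \ref{regles}(5); $(\lambda_1\lambda_4,\lambda_3\lambda_6)=(\lambda_3^2,\lambda_4^2)$ forces $\{1,r,r^2,r^4,r^5,r^6\}$ and needs a coefficient computation ending in $a_1a_4(1-r^2)^2=0$; the case $\lambda_1\lambda_4\in\mathcal{U}r_\mu$, $\lambda_3\lambda_6=\lambda_4\lambda_5$ forces $\{1,r^2,r^4,r^5,r^6,r^7\}$ and ends in a positivity contradiction; and when both $\lambda_1\lambda_4,\lambda_3\lambda_6\in\mathcal{U}r_\mu$ there are four further sub-subcases, all dispatched combinatorially. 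None of this case structure is visible in your sketch, and nothing in it guarantees that the list of ``parasitic overlaps'' you postpone is short or tractable; it is exactly where all the work lies.

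There is also a technical overreach in your endgame. The Section 2 remark (a square root for one ratio iff for all ratios) comes from the substitution $t\mapsto t^x$, which rescales \emph{all} parameters simultaneously, sending $(r_1,s,r_2)$ to $(r_1^x,s^x,r_2^x)$; it does not let you vary one ratio freely while the others stay fixed. Hence the ``identity holds for arbitrary $r$, so all coefficients of the polynomial in $\sqrt{r}$ vanish'' argument is only legitimate \emph{after} a given subcase has been reduced to a one-parameter support $\lambda_i=r^{k_i}\lambda_1$ with fixed exponents $k_i$ --- which is how the paper uses it --- and is not available in the raw three-parameter situation $(r_1,s,r_2)$ where you invoke it. (The paper also avoids it where possible: in some subcases the contradiction is direct positivity, e.g.\ $a_5^2r^{3/2}(1-r)^2/4+a_4a_6(r^{3/2}-1)(r^{7/2}-1)=0$ is impossible outright for $r>1$.) So the proposal, as it stands, does not establish the proposition: the combinatorial reduction must actually be carried out, and the polynomial argument must be confined to the single-parameter configurations that reduction produces.
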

Let us start with the geometric case. Identifying the coefficients associated with $\nu*\nu$ and $\mu*t\mu$, we obtain a system of 11 nonlinear equations. We use the next six one to get expression of $b_i, i \le 6$. 
$$\begin{array}{rl}  
	b_1^2- a_1^2&=0\\
	2b_1b_2-a_1a_2(1+r)&=0\\
	(2b_1b_3+b_2^2)- (a_1a_3(1+r^2)+a_2^2r)&=0 \\
	(2b_4b_6+b_5^2)- (a_4a_6(r^3+r^5)+a_5^2r^4)&=0\\
	+2b_5b_6 -a_5a_6(r^4+r^5)&=0\\
	b_6^2 -a_6^2r^5 &=0.
\end{array}$$

We have,
$$
\begin{array}{ccc} 
	b_1= a_1, & b_2= \frac{1}{2}a_2(1+r),& b_3=  (\frac{a_3}{2}(1+r^2)-\frac{a_2^2}{8a_1}(1-r)^2)\\
	b_6= a_6r^2\sqrt{r},& b_5= \frac{1}{2}a_5(1+r)r\sqrt{r}, & b_4= ( \frac{a_4}{2}(1+r^2) -\frac{a_5^2}{8a_6}(1-r)^2)\sqrt{r}).
\end{array}
$$

Plugging in the $ 7^{th} $  equation,  $(2b_1b_4+2b_2b_3)-(a_1a_4(1+r^3)+a_2a_3(r+r^2))=0$, we derive 

$
a_1a_4[(1+r^2)\sqrt{r}-1-r^3] - \frac{a_5^2a_1}{4a_6}(1-r)^2\sqrt{r}+ \frac{1}{2}a_2a_3(1+r)(1-r)^2 -\frac{a_2^3}{8a_1}(1+r)(1-r)^2=0.
$ And because of the coefficient of $r\sqrt{r}$ that should vanish, we obtain $\frac{a_5^2a_1}{4a_6}=0$. Contradiction.

Suppose now that $supp(\mu)$ is not geometric and write  $\frac{\lambda_2}{\lambda_1} = \frac{\lambda_3}{\lambda_2}=r_1$ and 
$ \frac{\lambda_5}{\lambda_4} = \frac{\lambda_6}{\lambda_5} =r_2$.\\
Aiming to describe the spectral picture, we distinguish 3 cases\\
$ a)$ $\{\lambda_1\lambda_4,\lambda_3\lambda_6\} \subset \mathcal{NU}r_\mu$\\
$ b)$ $\lambda_1\lambda_4 \in \mathcal{U}r_\mu $ and $\lambda_3\lambda_6 \notin \mathcal{U}r_\mu$ (or by symmetry $\lambda_1\lambda_4 \notin \mathcal{U}r_\mu $ and $\lambda_3\lambda_6 \in \mathcal{U}r_\mu$.\\
$ c)$ $\{\lambda_1\lambda_4,\lambda_3\lambda_6\} \subset \mathcal{U}r_\mu$.\\
$ a)$  If  $\{\lambda_1\lambda_4,\lambda_3\lambda_6\} \subset \mathcal{NU}r_\mu$, we get the
additional  information $\lambda_1\lambda_4\in \{\lambda_2\lambda_3, \lambda_3^2\}$ and $\lambda_3\lambda_6 \in \{\lambda_4^2,\lambda_4\lambda_5\} $. Thus
\begin{itemize}
	\item $(\lambda_1\lambda_4,\lambda_3\lambda_6 )= (\lambda_2\lambda_3,\lambda_4\lambda_5 )$. This implies 
	$\frac{\lambda_2}{\lambda_1} = \frac{\lambda_3}{\lambda_2}=\frac{\lambda_4}{\lambda_3}=\frac{\lambda_5}{\lambda_4} = \frac{\lambda_6}{\lambda_5}$  and then $\mu$ has a geometric support. Contradiction.
	\item  $(\lambda_1\lambda_4,\lambda_3\lambda_6 )= (\lambda_2\lambda_3,\lambda_4^2 )$, ( or symmetrically $(\lambda_1\lambda_4,\lambda_3\lambda_6 )= (\lambda_3^2,\lambda_4\lambda_5 )$). 
	
	It will follow that $\frac{\lambda_2}{\lambda_1} =\frac{\lambda_4}{\lambda_3} =\frac{\lambda_6}{\lambda_4}$ and then $r_1=r_2^2(=r^2).$  This gives $
	\lambda_2=\lambda_1r^2 \; \lambda_3=\lambda_1r^4 \; \lambda_4=\lambda_1r^6 \;$   $\lambda_5=\lambda_1r^7\; \lambda_6=\lambda_1r^8. $
	From the above expressions it  comes that
	$\lambda_1\lambda_{5} \in  {\cal U}r_\mu$, which contradicts Proposition \ref{regles}.
	\item   $(\lambda_1\lambda_4,\lambda_3\lambda_6 )= (\lambda_3^2,\lambda_4^2)$. We get $r_1^2=\frac{\lambda_3}{\lambda_1} =\frac{\lambda_4}{\lambda_3} =\frac{\lambda_6}{\lambda_4} =r_2^2$ and then $r_1=r_2 (=r)$. We deduce that $   \lambda_2=r\lambda_1 \; \lambda_3=r^2\lambda_1 \; \lambda_4=r^4\lambda_1 \;   \lambda_5=r^5\lambda_1\; \lambda_6=r^6\lambda_1. $
	
	This  leads to the equations: 
	
	$$  
	\left\{ 
	\begin{array}{lllllll}
		b_1^2 &=& a_1^2\lambda_1, & b_6^2 &=& a_6^2{\lambda_1}r^6, \\
		2b_1b_2 &=& a_1a_2\lambda_1(1+r), &  2b_5b_6 &=& a_5a_6\lambda_1r^5(1+r),\\
		2b_2b_3 &=&a_2a_3{\lambda_1}(r+r^2), &  2b_4b_5 &=& a_4a_5{\lambda_1}r^4(1+r). 
	\end{array} 
	\right.
	$$
	
	Hence    $b_1, b_2, b_3,b_4,b_5$ and $b_6$ are given by 
	$
	b_1 = a_1\sqrt{\lambda_1}, \;   b_2 = a_2\sqrt{\lambda_1}\frac{(1+r)}{2}, \; b_3 = a_3\sqrt{\lambda_1}r, \; 
	b_4 = a_4 \sqrt{\lambda_1}r^2,  \; b_5 =a_5 \sqrt{\lambda_1}r^2\frac{(1+r)}{2} $ and $ b_6=a_6 \sqrt{\lambda_1} r^3.$
	
	Replacing in the additional  equation
	$$   2b_1b_4+ b_3^2 = a_1a_4{\lambda_1}(r^4+1)+a_3^2{\lambda_1}r^2, $$ we get :
	$$
	a_1a_4(1-r^2)^2=0.     $$ Which is impossible, and then $ {\tilde W}_\mu$  is not subnormal.
\end{itemize}  
$b)$  $\lambda_1\lambda_4 \in \mathcal{U}r_\mu $ and $\lambda_3\lambda_6 \notin \mathcal{U}r_\mu$. In this case, we have $\lambda_3\lambda_6\in \{ \lambda_4\lambda_5, \lambda_4^2\}$ and $\lambda_2\lambda_4\in{\cal NU}r_\mu$. Moreover $\lambda_2\lambda_4\in \{\lambda_1\lambda_5,\lambda_3^2,\lambda_1\lambda_6\}$.
\begin{itemize}
	\item $\lambda_3\lambda_6 = \lambda_4\lambda_5$, gives $\frac{\lambda_2}{\lambda_1} =\frac{\lambda_3}{\lambda_2}$ and $ \frac{\lambda_4}{\lambda_3}=\frac{\lambda_5}{\lambda_4}=\frac{\lambda_6}{\lambda_5}$ and if moreover  $\lambda_2\lambda_4\in \{\lambda_1\lambda_5,\lambda_3^2\}$, we obtain  $\frac{\lambda_2}{\lambda_1}=\frac{\lambda_3}{\lambda_2} =\frac{\lambda_4}{\lambda_3} =\frac{\lambda_5}{\lambda_4}=\frac{\lambda_6}{\lambda_5}$ that contradicts $\lambda_1\lambda_4 \in \mathcal{U}r_\mu.$ It follows that $\lambda_2\lambda_4=\lambda_1\lambda_6$ and  then $r_1=r_2^2.$ This gives  $
	\lambda_2=r^2\lambda_1 \; \lambda_3=r^4\lambda_1 \; \lambda_4=r^5\lambda_1 \;   \lambda_5=r^6\lambda_1\; \lambda_6=r^7\lambda_1. $ It results that
	$$
	\left\{ 
	\begin{array}{l}
		b_1^2 = a_1^2\lambda_1,  \quad \quad \quad\quad\quad \quad \quad b_6^2=a_6^2{\lambda_1}r^7\\
		2b_1b_4= a_1a_4\lambda_1(1+r^5),  \quad 2b_5b_6= a_5a_6r^6\lambda_1(1+r).\\
	\end{array}
	\right.
	$$
	Then
	$$
	\begin{array}{cccc}
		b_1 = a_1\sqrt{\lambda_1},& b_4 =a_4 \sqrt{\lambda_1} \frac{(1+r^5)}{2}, &    b_5 = a_5\sqrt{\lambda_1}r^{\frac52}\frac{(1+r)}{2}, & b_6=a_6 \sqrt{\lambda_1} r^{\frac72}
	\end{array} 
	$$
	Replacing in the additional equation:
	$2b_4b_6 + b_5^2=a_4a_6{\lambda_1}r^5(1+r^2)+a_5^2\lambda_1r^6 $, we find
	$$
	a_5^2 r^{\frac32}\frac{(1-r)^2}{4}+a_4a_6(r^{\frac32}-1)(r^{\frac72}-1)=0.
	$$
	Which is again impossible and hence  $ {\tilde W}_\mu$ is not subnormal.
	\item $\lambda_3\lambda_6 = \lambda_4^2.$ As before  $\lambda_2\lambda_4\ne \lambda_3^2$. Here we have two subcases:\\
	$i)$  $\lambda_2\lambda_4 = \lambda_1\lambda_5$.  Which gives 
	$ \lambda_2=r\lambda_1 \; \lambda_3=r^2\lambda_1 \; \lambda_4=r^4\lambda_1 \;   \lambda_5=r^5\lambda_1\; \lambda_6=r^6\lambda_1. $ Thus $\lambda_1\lambda_4=\lambda_3^2$, contradiction.\\
	$ii)$   $\lambda_2\lambda_4= \lambda_1\lambda_6$, gives   $\frac{\lambda_6}{\lambda_4}=\frac{\lambda_2}{\lambda_1}$ and we have
	$\frac{\lambda_4}{\lambda_3} =\frac{\lambda_6}{\lambda_4}$, we obtain $\frac{\lambda_2}{\lambda_1}=\frac{\lambda_4}{\lambda_3}$  that contradicts $\lambda_1\lambda_4 \in \mathcal{U}r_\mu $.
\end{itemize}
$ c)$ $\{\lambda_1\lambda_4,\lambda_3\lambda_6\} \subset \mathcal{U}r_\mu$. We deduce that
$\{\lambda_2\lambda_4,\lambda_3\lambda_5\} \subset \mathcal{NU}r_\mu$. 
Since $\lambda_2\lambda_4=\lambda_3^2$ will imply $\lambda_1\lambda_4=\lambda_2\lambda_3$, we have 
$\lambda_2\lambda_4\ne\lambda_3^2$ and similarly $\lambda_3\lambda_5\ne\lambda_4^2$. Thus $\lambda_2\lambda_4\in \{\lambda_1\lambda_5, \lambda_1\lambda_6\}$ and $\lambda_3\lambda_6\in \{\lambda_2\lambda_6, \lambda_1\lambda_6\}$.\\
$i)$  If we suppose now that $\lambda_2\lambda_4=\lambda_1\lambda_5$ and $\lambda_3\lambda_5= \lambda_2\lambda_6$, we will have $r_1=r_2$ and hence 
$\frac{\lambda_2}{\lambda_1} =\frac{\lambda_3}{\lambda_2}=\frac{\lambda_5}{\lambda_4}=\frac{\lambda_6}{\lambda_5}(=r)$. We deduce that 
$\lambda_2=r\lambda_1,\lambda_3=r^2\lambda_1,  \lambda_5=r\lambda_4$ and $\lambda_6=r^2\lambda_4$. 
If we  show that $\lambda_4^2 \in{\cal U}r_\mu$, we will get a contradiction since $\{\lambda_1^2, \lambda_1\lambda_4\} \subset  {\cal U}r_\mu$.\\
Suppose $\lambda_4^2=\lambda_i\lambda_j$ for $i\le 3$ and $j\ge 5$. We will have $\lambda_4^2=r^{i-1}r^{j-4}\lambda_1\lambda_4$ and thus 
$\lambda_4=r^{i+j-5}\lambda_1$. Now since $i+j-5 \le 3$ and $\lambda_4 \notin\{\lambda_2, \lambda_3\}$, it follows that  $\lambda_4= \lambda_1r^3$ and then 
$ \lambda_1\lambda_4=\lambda_2\lambda_2$. Impossible\\
$ii)$ $(\lambda_2\lambda_4, \lambda_3\lambda_5) =(\lambda_1\lambda_6, \lambda_1\lambda_6), $ that gives $\lambda_2\lambda_4= \lambda_3\lambda_5$ and hence $\lambda_1\lambda_4= \lambda_2\lambda_5$, impossible.\\
$iii)$ $(\lambda_2\lambda_4, \lambda_3\lambda_5) =(\lambda_1\lambda_5, \lambda_1\lambda_6)$. We will obtain $r_1=r_2$ and $r_2=r_1^2$. Contradiction again. \\
$(iv)$ $(\lambda_2\lambda_4, \lambda_3\lambda_5) =(\lambda_1\lambda_6\lambda_2\lambda_6)$. Is the symmetric case of $(iii)$, which run in the same manner.
\section{Concluding remarks}

As shown in the case $p=6$, the number of atoms of $\mu*t\mu$ in the case where $\mu$ admits a square root may be $12, 14$ or $15$. It is intriguing to determine the set of integers $q$ such that there exist $\mu$ for which $\mu*t\mu$ admits a square root and such that $card(\mu*t\mu)= q$.\\

It is not difficult to see that the method used in this paper provides an algorithm to solve the problem for arbitrary number of atoms. If is not too long to treat this problem for small number of atoms (say for $p\le 10$), it is more complicated to run in all computations for big numbers of atoms.\\

Recall that  a weighted shift is said to be  recursively generated   if its moment sequence $(\gamma_n)_{n\ge 0}$satisfies 
$$\gamma_{n+r}= a_{r-1}\gamma_{n+r-1}+ a_{r-2}\gamma_{n+r-2}+\cdots +a_1\gamma_{n+1}+a_{0}\gamma_{n},$$ 
where $a_1, \cdots, a_{r-1}$ are given numbers and $r\ge 1$ is an integer.
It is known that a subnormal weighted shift is recursively generated if and only if the associated Berger measure in finitely atomic. 
In connection with the investigations of this paper, we mention the next partial version of a question  stated in \cite{exn} as question 4.2. 

\textit{"We do not know of a recursively generated subnormal weighted shift (with
	the trivial exceptions of scalar multiples of the unweighted shift)  for which any of the Aluthge transform,
	or root shifts are subnormal."}

The approach used  of this paper provides a promising way to solve this problem. For recursively generated shift of order $ r\le 6$, a complete description is given.
	
\end{document}